\documentclass[english]{article}
\usepackage[T1]{fontenc}
\usepackage[latin9]{inputenc}
\usepackage{color}
\usepackage{array}
\usepackage{float}
\usepackage{multirow}
\usepackage{amsmath}
\usepackage{amsthm}
\usepackage{amssymb}
\usepackage{graphicx}

\makeatletter

\providecommand{\tabularnewline}{\\}
\floatstyle{ruled}
\newfloat{algorithm}{tbp}{loa}
\providecommand{\algorithmname}{Algorithm}
\floatname{algorithm}{\protect\algorithmname}

\numberwithin{equation}{section}
\numberwithin{figure}{section}
\newcommand{\lyxaddress}[1]{
	\par {\raggedright #1
	\vspace{1.4em}
	\noindent\par}
}
\theoremstyle{plain}
\newtheorem{thm}{\protect\theoremname}
\theoremstyle{definition}
\newtheorem{defn}[thm]{\protect\definitionname}
\theoremstyle{plain}
\newtheorem{lem}[thm]{\protect\lemmaname}
\newenvironment{lyxlist}[1]
	{\begin{list}{}
		{\settowidth{\labelwidth}{#1}
		 \setlength{\leftmargin}{\labelwidth}
		 \addtolength{\leftmargin}{\labelsep}
		 }}
	{\end{list}}
\theoremstyle{remark}
\newtheorem{rem}[thm]{\protect\remarkname}
\theoremstyle{plain}
\newtheorem{prop}[thm]{\protect\propositionname}
\theoremstyle{remark}
\newtheorem*{acknowledgement*}{\protect\acknowledgementname}

\usepackage{algorithmic}

\makeatother

\usepackage{babel}
\providecommand{\acknowledgementname}{Acknowledgement}
\providecommand{\definitionname}{Definition}
\providecommand{\lemmaname}{Lemma}
\providecommand{\propositionname}{Proposition}
\providecommand{\remarkname}{Remark}
\providecommand{\theoremname}{Theorem}

\begin{document}
\title{A numerical analysis of planar central and balanced configurations
in the $(n+1)$-body problem with a small mass}
\author{Alexandru Doicu$^{1}$\thanks{E-mail address: alex.doicua@gmail.com},
Lei Zhao$^{2}$\thanks{E-mail address: lei.zhao@math.uni-augsburg.de}
and Adrian Doicu$^{3}$\thanks{Corresponding author. E-mail address: adrian.doicu@dlr.de}}
\maketitle

\lyxaddress{$^{1}$former: Institut für Mathematik, Universität Ausgburg, Augsburg
86135, Germany}

\lyxaddress{$^{2}$Institut für Mathematik, Universität Ausgburg, Augsburg 86135,
Germany}

\lyxaddress{$^{3}$Institut für Methodik der Fernerkundung (IMF), Deutsches Zentrum
für Luft- und Raumfahrt (DLR), Oberpfaffenhofen 82234, Germany }
\begin{abstract}
Two numerical algorithms for analyzing planar central and balanced
configurations in the $(n+1)$-body problem with a small mass are
presented. The first one relies on a direct solution method of the
$(n+1)$-body problem by using a stochastic optimization approach,
while the second one relies on an analytic-continuation method, which
involves the solutions of the $n$-body and the restricted $(n+1)$-body
problem, and the application of a local search procedure to compute
the final $(n+1)$-body configuration in the neighborhood of the configuration
obtained at the first two steps. Some exemplary central and balanced
configurations in the cases $n=4,5,6$ are shown.
\end{abstract}

\section{Introduction}

The $n$-body problem is the problem of predicting motions of a group
of celestial objects interacting with each other gravitationally.
A central configuration is an initial configuration such that if the
particles were all released with zero velocity, they would all collapse
toward the center of mass at the same time. The central configurations
are important in $n$-body problems because they are (i) bifurcation
points for the topological classification of the coplanar $n$-body
problem, and (ii) the starting points for finding some new classes
of periodic solutions. 

A central configuration in the $(n+1)$-body problem is a configuration
which is the limit of central configurations in the full $(n$+1)-body
problem as the mass of the $(n+1)$-th particle tends to zero while
the remaining particles approach definite positive values (Hagihara
1970). Thus central configuration in the $(n+1)$-body problem must
be continuable to positive masses, and this is achieved by a proper
non-degeneracy condition. The restricted three-body problem (both
circular and elliptical) was explored extensively during the 19th
and 20th century. The problem has undergone extensive treatment both
in terms of analytical and numerical tools. Inspired by the restricted
three-body problem, the study of the four-body problem was also simplified
by firstly considering one of the bodies to be of negligible mass
and position of the other three bodies according to different configurations
such as equilateral triangular configuration or bicircular formation.
In this context, it should be mentioned that Arenstorf (1982) obtained
the number of central configurations in the four-body problem by starting
with one zero mass and then analytically continue it into positive
masses. By also using the method of analytic continuation, Xia (1991)
found the exact numbers of central configurations for some open sets
of $n$ positive masses for any choice of $n$. Starting with two
zero masses, the corresponding central configurations, especially
the ones in which two zero masses are at the same point, were obtained.
Under certain conditions, these central configurations were analytically
continued into a full $n$-body problem with all masses positive.

In this work we analyze planar central and balanced configurations
in the $(n+1)$-body problem from a numerical point of view. The case
considered here consists in the computation of central and balanced
configurations when the masses $m_{1},\ldots,m_{n},m_{n+1}$ are given,
and $m_{n+1}$ is finite but very small as compared to $m_{i}$, $i=1,\ldots,n$.
For this purpose, we will use (i) a stochastic optimization approach
(Doicu et al. 2020) for solving directly the $(n+1)$-body problem,
and (ii) a new algorithm relying on an analytic-continuation method. 

The paper is organized as follows. A succinct mathematical description
of central and balanced configurations in the $n$-body problem, as
well as an overview of the stochastic optimization algorithm are provided
in Section 2. In Section 3, the analytic-continuation method for computing
central and balanced configurations in the $(n+1)$-body problem with
a small mass is discussed, and the underlying algorithm is described.
Numerical results are given in Section 4, and some conclusions are
summarized in Section 5.

\section{Planar central and balanced configurations in the $n$-body problem}

Consider $n$ point masses $m_{1},\ldots,m_{n}>0$ with positions
$\mathbf{q}_{1},\ldots,\mathbf{q}_{n}$, where $\mathbf{q}_{i}=(x_{i},y_{i})^{T}\in\mathbb{R}^{2}$.
Define the mass and configuration vectors $\mathbf{m}=(m_{1},\ldots,m_{n})^{T}\in\mathbb{R}^{n+1}$
and $\mathbf{q}=(\mathbf{q}_{1}^{T},\ldots,\mathbf{q}_{n}^{T})^{T}\in\mathbb{R}^{2n}$,
respectively, and let 
\begin{align*}
\Delta & =\{\mathbf{q}=(\mathbf{q}_{1}^{T},\ldots,\mathbf{q}_{n}^{T})^{T}\in\mathbb{R}^{2n}\mid\mathbf{q}_{i}=\mathbf{q}_{j}\text{ for some }i\not=j\},
\end{align*}
be the subspace of $\mathbb{R}^{2n}$ consisting of collisions, 
\begin{equation}
U_{n}(\mathbf{m},\mathbf{q})=\sum_{1\leq i<j\leq n}\frac{m_{i}m_{j}}{||\mathbf{q}_{j}-\mathbf{q}_{i}||},\label{eq:NewtonPot}
\end{equation}
the Newtonian force function for the configuration $\mathbf{q}\in\mathbb{R}^{2n}\backslash\Delta$,
where $||\cdot||$ is the Euclidean norm in $\mathbb{R}^{2}$, 
\begin{equation}
\nabla_{i}U_{n}(\mathbf{m},\mathbf{q})=\left(\begin{array}{c}
\dfrac{\partial U_{n}}{\partial x_{i}}(\mathbf{m},\mathbf{q})\\
\dfrac{\partial U_{n}}{\partial y_{i}}(\mathbf{m},\mathbf{q})
\end{array}\right)=\sum_{\substack{j=1\\
j\not=i
}
}^{n}\frac{m_{i}m_{j}}{||\mathbf{q}_{j}-\mathbf{q}_{i}||^{3}}(\mathbf{q}_{j}-\mathbf{q}_{i})\label{eq:DU}
\end{equation}
the gradient of $U_{n}$ with respect to the coordinates of $\mathbf{q}_{i}$,

\begin{equation}
\mathbf{c}(\mathbf{m},\mathbf{q})=\Bigl(\sum_{i=1}^{n}m_{i}\Bigr)^{-1}\sum_{i=1}^{n}m_{i}\mathbf{q}_{i}\in\mathbb{R}^{2}\label{eq:CM}
\end{equation}
the center of mass of the system of point masses, and $\mathbf{S}\in\mathbb{R}^{2\times2}$
a positive definite symmetric matrix. 
\begin{defn}
\label{def:A-configuration-}A configuration $\mathbf{q}=(\mathbf{q}_{1}^{T},\ldots,\mathbf{q}_{n}^{T})^{T}\in\mathbb{R}^{2n}\backslash\Delta$
is said to form a balanced configuration with respect to the matrix
$\mathbf{S}$ (in short $\text{BC}(\mathsf{\mathbf{S}})$) if there
exists a $\lambda\in\mathbb{R}\backslash\{0\}$ such that the equations
\begin{equation}
\nabla_{i}U_{n}(\mathbf{m},\mathbf{q})+m_{i}\lambda\mathbf{S}(\mathbf{q}_{i}-\mathbf{c}(\mathbf{m},\mathbf{q}))=\mathbf{0},\label{eq:1}
\end{equation}
are satisfied for all $i=1,\ldots,n$. A configuration $\mathbf{q}=(\mathbf{q}_{1}^{T},\ldots,\mathbf{q}_{n}^{T})^{T}\in\mathbb{R}^{2n}\backslash\Delta$
is said to form a central configuration (in short CC) if there exists
a $\lambda\in\mathbb{R}\backslash\{0\}$ for which Eqs. (\ref{eq:1})
are satisfied with $\mathbf{S}=\sigma\mathbf{I}_{2\times2}$ for some\textcolor{red}{{}
}$\sigma\in\mathbb{R}_{+}$. 
\end{defn}

Obviously, central configurations are special cases of balanced configurations
with e.g. $\mathbf{S}=\mathbf{I}_{2\times2}$. 

Consider the diagonal action of $\text{O}(2)$ on $\mathbb{R}^{2n}$,
defined by
\begin{align}
\text{O}(2)\times\mathbb{R}^{2n}\backslash\Delta & \rightarrow\mathbb{R}^{2n}\backslash\Delta\nonumber \\
(\mathbf{O},\mathbf{q}) & \mapsto\mathbf{O}\mathbf{q},\label{eq:1a}
\end{align}
where 
\[
\mathbf{O}\mathbf{q}=\left(\begin{array}{c}
\mathbf{O}\mathbf{q}_{1}\\
\vdots\\
\mathbf{O}\mathbf{q}_{n}
\end{array}\right),
\]
We note the following result, which is a direct consequence of Definition
\ref{def:A-configuration-}.
\begin{lem}
\label{lem:Let--be}Let $\mathbf{q}\in\mathbb{R}^{2n}\backslash\Delta$
be a $\text{BC}(\mathbf{S})$ and $\mathbf{O}\in\text{O}(2)$ an orthogonal
matrix. Then $\mathbf{O}\mathbf{q}$ is a $\text{BC}(\mathbf{O}\mathbf{S}\mathbf{O}^{T})$.
\end{lem}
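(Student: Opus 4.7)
The plan is to exploit three elementary equivariance properties of the ingredients appearing in Eq.~(\ref{eq:1}) and then substitute $\mathbf{O}\mathbf{q}$ for $\mathbf{q}$. Specifically, I would first verify that (i) the force function is invariant, $U_n(\mathbf{m},\mathbf{O}\mathbf{q})=U_n(\mathbf{m},\mathbf{q})$, because $\mathbf{O}$ is an isometry and $U_n$ depends only on pairwise Euclidean distances; (ii) the gradient transforms equivariantly, i.e.\ $\nabla_i U_n(\mathbf{m},\mathbf{O}\mathbf{q})=\mathbf{O}\,\nabla_i U_n(\mathbf{m},\mathbf{q})$, which follows directly from the explicit formula (\ref{eq:DU}) by pulling the matrix $\mathbf{O}$ out of the sum using $\mathbf{O}\mathbf{q}_j-\mathbf{O}\mathbf{q}_i=\mathbf{O}(\mathbf{q}_j-\mathbf{q}_i)$ and again the isometry property of $\mathbf{O}$; and (iii) the center of mass is linear in the positions, so $\mathbf{c}(\mathbf{m},\mathbf{O}\mathbf{q})=\mathbf{O}\,\mathbf{c}(\mathbf{m},\mathbf{q})$ straight from definition (\ref{eq:CM}).

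With these three identities in hand, the proof is a one-line substitution. Using the same $\lambda$ witnessing that $\mathbf{q}$ is a $\text{BC}(\mathbf{S})$, I would compute
\[
\nabla_i U_n(\mathbf{m},\mathbf{O}\mathbf{q})+m_i\lambda\,(\mathbf{O}\mathbf{S}\mathbf{O}^T)\bigl(\mathbf{O}\mathbf{q}_i-\mathbf{c}(\mathbf{m},\mathbf{O}\mathbf{q})\bigr)
=\mathbf{O}\,\nabla_i U_n(\mathbf{m},\mathbf{q})+m_i\lambda\,\mathbf{O}\mathbf{S}\mathbf{O}^T\mathbf{O}\bigl(\mathbf{q}_i-\mathbf{c}(\mathbf{m},\mathbf{q})\bigr),
\]
and then collapse $\mathbf{O}^T\mathbf{O}=\mathbf{I}_{2\times 2}$ and factor $\mathbf{O}$ outside to obtain $\mathbf{O}$ times the left-hand side of (\ref{eq:1}), which vanishes by hypothesis. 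Since $\mathbf{O}$ is invertible, the transformed equation is zero for every $i$, showing that $\mathbf{O}\mathbf{q}$ satisfies the balanced-configuration equations with matrix $\mathbf{O}\mathbf{S}\mathbf{O}^T$ and the same multiplier $\lambda\neq 0$.

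Finally I should note that $\mathbf{O}\mathbf{q}\in\mathbb{R}^{2n}\setminus\Delta$, which is immediate because $\mathbf{O}$ is a bijection on $\mathbb{R}^2$ and therefore preserves non-coincidence of the points, and that $\mathbf{O}\mathbf{S}\mathbf{O}^T$ is again positive definite and symmetric, so it is an admissible matrix for the balanced-configuration definition. The only step requiring any care is the gradient equivariance in~(ii); once the explicit expression (\ref{eq:DU}) is invoked this is routine, so I do not anticipate a genuine obstacle.
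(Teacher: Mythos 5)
Your proof is correct and is precisely the routine equivariance computation the paper has in mind when it states this lemma without proof as a ``direct consequence of Definition \ref{def:A-configuration-}''. Nothing to add.
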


Two direct consequences of this lemma are the following results:
\begin{enumerate}
\item If $\mathbf{q}\in\mathbb{R}^{2n}\backslash\Delta$ forms a central
configuration and $\mathbf{O}\in\text{O}(2)$ is an orthogonal matrix,
then $\mathbf{O}\mathbf{q}$ is also a central configuration with
the same $\lambda$.
\item The positive definite $2\times2$ matrix $\mathbf{S}$ can be assumed
to be diagonal, i.e.,
\begin{equation}
\mathbf{S}=\left(\begin{array}{cc}
\sigma_{x} & 0\\
0 & \sigma_{y}
\end{array}\right)\label{eq:6}
\end{equation}
with $\sigma_{x},\sigma_{y}>0$ (indeed, if $\mathbf{q}$ is a $\text{BC}(\mathbf{S})$
and $\mathbf{O}\in\text{O}(2)$ is an orthogonal matrix such that
$\mathbf{O}\mathbf{S}\mathbf{O}^{T}=\text{diag}(\sigma_{x},\sigma_{y})$,
then $\mathbf{O}\mathbf{q}$ is a $\text{BC}(\mathbf{O}\mathbf{S}\mathbf{O}^{T})=\text{BC}(\text{diag}(\sigma_{x},\sigma_{y}))$). 
\end{enumerate}
In view of these results, we assume in the following that the matrix
$\mathbf{S}$ is diagonal, i.e., $\mathbf{S}$ is as in Eq. (\ref{eq:6})
with $\sigma_{x},\sigma_{y}>0$.

For $\boldsymbol{\xi},\boldsymbol{\eta}\in\mathbb{R}^{2}$, we define
their inner product with respect to the positive definite diagonal
matrix $\mathbf{S}$ by 
\begin{equation}
\bigl\langle\boldsymbol{\xi},\boldsymbol{\eta}\bigr\rangle_{\mathbf{S}}:=\boldsymbol{\xi}^{T}\mathbf{S}\boldsymbol{\eta},\quad||\boldsymbol{\xi}||_{\mathbf{S}}^{2}=\boldsymbol{\xi}^{T}\mathbf{S}\mathbf{\boldsymbol{\xi}},\label{eq:ScalProd}
\end{equation}
and accordingly, the \emph{$\mathbf{S}$-}weighted moment of inertia
by 
\begin{equation}
I_{\mathbf{S}}(\mathbf{m},\mathbf{q})=\sum_{j=1}^{n}m_{j}||\mathbf{q}_{j}-\mathbf{c}||_{\mathbf{S}}^{2}.\label{eq:Is}
\end{equation}
In this context, assuming that the configuration $\mathbf{q}$ forms
a $\text{BC}(\mathbf{S})$, taking the inner product of Eq. (\ref{eq:1})
with $\mathbf{q}_{i}-\mathbf{c}$, and summing up over all $i=1,\ldots,n$,
we find
\begin{equation}
\lambda=\frac{U(\mathbf{m},\mathbf{q})}{I_{\mathbf{S}}(\mathbf{m},\mathbf{q})}>0.\label{eq:Lambda}
\end{equation}
Thus, in the definition of balanced configurations, the parameter
$\lambda$ cannot be chosen arbitrary; it depends on $\mathbf{q}$
and $\mathbf{S}$. 

In the next step, we consider the \emph{$\mathbf{S}$}-normalized
configuration space defined by
\[
\mathcal{N}(\mathbf{m},\mathbf{S})=\{\mathbf{q}\in\mathbb{R}^{2n}\backslash\Delta\mid\mathbf{c}(\mathbf{m},\mathbf{q})=0,I_{\mathbf{S}}(\mathbf{m},\mathbf{q})=1\}\subset\mathbb{R}^{2n}.
\]
Starting from a BC with respect to $\mathbf{S}$, it is possible to
normalize this configuration so that the new configuration is a BC
with respect to $\mathbf{S}$ in $\mathcal{N}(\mathbf{m},\mathbf{S})$.
Actually, by the change of variable $\widetilde{\mathbf{q}}_{i}=\sqrt{1/I_{\mathbf{S}}(\mathbf{m},\mathbf{q})}(\mathbf{q}_{i}-\mathbf{c}(\mathbf{m},\mathbf{q}))$
it can be shown that $\widetilde{\mathbf{q}}=(\mathbf{\widetilde{q}}_{1}^{T},\ldots,\widetilde{\mathbf{q}}_{n}^{T})^{T}$
is a $\text{BC}(\mathbf{S})$ with the parameter $\widetilde{\lambda}=U_{n}(\mathbf{m},\mathbf{\widetilde{q}})>0$,
and has the center of mass $\widetilde{\mathbf{c}}(\mathbf{m},\widetilde{\mathbf{q}})=0$
and the $\mathbf{S}$-weighted moment of inertia $I_{\mathbf{S}}(\mathbf{m},\widetilde{\mathbf{q}})=1$;
thus, $\widetilde{\mathbf{q}}\in\text{\ensuremath{\mathcal{N}(\mathbf{m},\mathbf{S})}}$. 

According to Moeckel (2014a), for a positive definite symmetric $2\times2$
matrix $\mathbf{S}$, a configuration $\mathbf{q}$ is a $\text{BC}(\mathbf{S})$
if and only if its corresponding normalized configuration $\widetilde{\mathbf{q}}\in\mathcal{N}(\mathbf{m},\mathbf{S})$
is a critical point of $\widetilde{U}_{n}=U_{n}|_{\mathcal{N}(\mathbf{S})}:\mathcal{N}(\mathbf{S})\rightarrow\mathbb{R}$.
The nullity at a critical point is defined as $\text{null}(\widetilde{\mathbf{q}}):=\dim(\ker(H(\widetilde{\mathbf{q}})))$,
where $\mathbf{H}(\widetilde{\mathbf{q}})$ is the Hessian quadratic
form of $\widetilde{U}_{n}$ on $T_{\widetilde{\mathbf{q}}}\mathcal{N}(\mathbf{m},\mathbf{S})$.
Note that the Hessian $\mathbf{H}(\widetilde{\mathbf{q}})$ of $\widetilde{U}_{n}:\mathcal{N}(\mathbf{m},\mathbf{S})\rightarrow\mathbb{R}$
at a critical point $\widetilde{\mathbf{q}}\in\text{Crit}(\widetilde{U}_{n})$
is given by $\mathbf{H}(\widetilde{\mathbf{q}})\mathbf{v}=\mathbf{v}^{T}\mathbf{H}(\widetilde{\mathbf{q}})\mathbf{v}$,
where 
\begin{equation}
\mathbf{H}(\widetilde{\mathbf{q}})=D^{2}U_{n}(\widetilde{\mathbf{q}})+U_{n}(\mathbf{m},\widetilde{\mathbf{q}})\widehat{\mathbf{S}}\mathbf{M}\label{eq:A3}
\end{equation}
and 
\[
\widehat{\mathbf{S}}=\underbrace{\left(\begin{array}{ccc}
\mathbf{S} & \cdots & 0\\
\vdots & \ddots & \vdots\\
0 & \cdots & \mathbf{S}
\end{array}\right)}_{n\text{ blocks}},\quad\mathbf{M}=\left(\begin{array}{cccccc}
m_{1}\\
 & m_{1} &  &  & \boldsymbol{0}\\
 &  &  & \ddots\\
 & \boldsymbol{0} &  &  & m_{n}\\
 &  &  &  &  & m_{n}
\end{array}\right).
\]

In the case of central configurations, the normalized configuration
space $\mathcal{N}(\mathbf{m},\mathbf{S})$ and the Newtonian force
function $\widetilde{U}_{n}$ are invariant under the diagonal $\text{O}(2)$-action.
Hence, $\widetilde{U}_{n}$ descends to a function $\widehat{U}_{n}:\mathcal{N}(\mathbf{m},\mathbf{S})/\text{O}(2)\rightarrow\mathbb{R}$.
For a central configuration $\widetilde{\mathbf{q}}\in\mathcal{N}(\mathbf{m},\mathbf{S})$,
$\text{null}(\widetilde{\mathbf{q}})\geq1$ and the equivalence class
$[\widetilde{\mathbf{q}}]\in\mathcal{N}(\mathbf{m},\mathbf{S})/\text{O}(2)$
is a critical point of $\widehat{U}_{n}$. Consequently, the Hessian
$\widehat{\mathbf{H}}([\widehat{\mathbf{q}}])$ of $\widehat{U}_{n}$
at $[\widetilde{\mathbf{q}}]$ is obtained by descending $\mathbf{H}(\widetilde{\mathbf{q}})$
to the space $T_{[\widetilde{\mathbf{q}}]}\left(\mathcal{N}(\mathbf{m},\mathbf{S})/\text{O}(2)\right)$. 

In Moeckel (2014a) and Doicu et al. (2020), the non-degeneracy of
a critical point is defined as follows. 
\begin{defn}
\label{def:Let-.-A}Let $\mathbf{q}$ be a $\text{BC}(\mathbf{S})$
with $\mathbf{S}=\text{diag}(\sigma_{x},\sigma_{y})$. Then
\begin{lyxlist}{00.00.0000}
\item [{Case$\ $1:}] for $\sigma_{x}=\sigma_{y}$, the configuration $\mathbf{q}$
is called non-degenerate if the Hessian $\widehat{\mathbf{H}}([\widetilde{\mathbf{q}}])$
is non-degenerate, while 
\item [{Case$\ $2:}] for $\sigma_{x}\not=\sigma_{y}$, the configuration
$\mathbf{q}$ is called non-degenerate if the Hessian $\mathbf{H}(\widetilde{\mathbf{q}})$
is non-degenerate, 
\end{lyxlist}
\end{defn}

where $\widetilde{\mathbf{q}}$ represents the corresponding normalized
configuration of $\mathbf{q}$.
\begin{rem}
\label{rem:By-following-Moczurad}A more convenient way to define
non-degenerateness of central and balanced configurations was suggested
in Moczurad and Zgliczynski (2019) and Moeckel (2014a). In the case
$\sigma_{x}=\sigma_{y}$ a normalized central configuration $\widetilde{\mathbf{q}}$
is called non-degenerate if the matrix $\mathbf{H}(\widetilde{\mathbf{q}})=D^{2}U_{n}(\widetilde{\mathbf{q}})+\lambda\widehat{\mathbf{S}}\mathbf{M}$,
where $\lambda=U_{n}(\mathbf{m},\widetilde{\mathbf{q}})$, is of rank
$2n-1$, while in the case $\sigma_{x}\not=\sigma_{y}$, a normalized
configuration $\widetilde{\mathbf{q}}$ is called non-degenerate if
the matrix $\mathbf{H}(\widetilde{\mathbf{q}})=D^{2}U_{n}(\widetilde{\mathbf{q}})+\lambda\widehat{\mathbf{S}}\mathbf{M}$
is of full rank $2n$. It is easy to check that both definitions are
equivalent.
\end{rem}

In summary, from Eqs. (\ref{eq:DU}) and (\ref{eq:1}) in conjunction
with $\widetilde{\lambda}=U_{n}(\mathbf{m},\mathbf{\widetilde{q}})$,
$\widetilde{\mathbf{c}}(\mathbf{m},\widetilde{\mathbf{q}})=0$, and
$I_{\mathbf{S}}(\mathbf{m},\widetilde{\mathbf{q}})=1$, we infer that
the position vectors of a balanced configuration $\text{BC}(\mathsf{\mathbf{S}})$
satisfies the relative equilibrium equations
\begin{equation}
\mathbf{f}_{i}^{(n)}(\mathbf{m},\widetilde{\mathbf{q}}):=\sum_{\substack{j=1\\
j\not=i
}
}^{n}\frac{m_{j}}{||\widetilde{\mathbf{q}}_{j}-\widetilde{\mathbf{q}}_{i}||^{3}}(\widetilde{\mathbf{q}}_{j}-\widetilde{\mathbf{q}}_{i})+U_{n}(\mathbf{m},\mathbf{\widetilde{q}})\mathbf{S}\widetilde{\mathbf{q}}_{i}=\boldsymbol{0},\label{eq:Alex1}
\end{equation}
for all $i=1,\ldots,n$. In the following we will deal only with normalized
configurations and renounce on the tilde character ``\textasciitilde ''.

A stochastic optimization algorithm for analyzing planar central and
balanced configurations in the $n$-body problem, was designed in
Doicu et al. (2020). This numerical approach is a modified version
of the Minfinder method of Tsoulos and Lagaris (2006) and is devoted
to the solution of the generic system of nonlinear equations 
\begin{equation}
\mathbf{f}(\mathbf{q})=\boldsymbol{0},\label{eq:MinFinder}
\end{equation}
where $\mathbf{q}\in\mathbb{R}^{N}$ is assumed to lie in the box
$B=[a_{1},b_{1}]\times[a_{2},b_{2}]\ldots\times[a_{N},b_{N}]\subset\mathbb{R}^{N}$,
$\mathbf{f}(\mathbf{q})=(f_{1}(\mathbf{q}),f_{2}(\mathbf{q}),\ldots,f_{M}(\mathbf{q}))^{T}$,
$M\geq N$, and $f_{i}:B\rightarrow\mathbb{R}$ are continuous functions.
Actually, the solution of the system of equations (\ref{eq:MinFinder})
is equivalent to the solution of an optimization problem consisting
in the computation of all local minima of the objective function $F:B\subset\mathbb{R}^{N}\rightarrow\mathbb{R}$
given by
\[
F(\mathbf{q})=\frac{1}{2}||\mathbf{f}(\mathbf{q})||^{2}.
\]
The stochastic optimization approach is illustrated in Algorithm \ref{alg:AlgMinFinder},
where $\mathcal{L}$ is a deterministic local optimization method,
while $\mathcal{L}(\mathbf{s})$ is the point where the local search
procedure $\mathcal{L}$ terminates when started at point $\mathbf{s}$. 

\begin{algorithm}
\caption{The main steps of the stochastic optimization method. \label{alg:AlgMinFinder}}

$\bullet$ Initialize the set of distinct solutions $Q=\textrm{Ø}$.

$\bullet$ Generate a set $S=\{\mathbf{s}_{k}\}_{k=1}^{N_{\textrm{s}}}$
of $N_{\textrm{s}}$ sample points in the box $B$.

\textbf{For} $k=1,\ldots,N_{\mathrm{s}}$ \textbf{do}

\hspace{0.5cm}$\bullet$ Initialize the number of distinct solutions
at step $k$,

\hspace{0.5cm}$N_{\mathrm{sol}}(k)=|Q|$.

\hspace{0.5cm}$\mathbf{s}=\mathbf{s}_{k}$.

\hspace{0.5cm}\textbf{If} $\mathbf{s}$ is a start point \textbf{then}

\hspace{1cm}$\bullet$ Start a local search $\mathbf{q}=\mathcal{L}(\mathbf{s})$.

\hspace{1cm}$\bullet$ \textbf{If} $\mathbf{q}\notin Q$, insert
$\mathbf{q}$ in the set of distinct solutions $Q$ and 

\hspace{1cm}update $N_{\mathrm{sol}}(k)\leftarrow N_{\mathrm{sol}}(k)+1$.

\hspace{0.5cm}\textbf{End if}

\hspace{0.5cm}\textbf{If }$N_{\mathrm{sol}}(k)$ does not change
within a prescribed 

\hspace{0.5cm}number of iteration steps $k^{\star}$ \textbf{exit}

\textbf{End for}
\end{algorithm}

The following key elements of the algorithm can be emphasized.
\begin{enumerate}
\item \emph{Generation of sampling points}. A sampling method should create
data that accurately represent the underlying function and preserve
the statistical characteristics of the complete dataset. The following
sampling methods are implemented:\emph{ }(i) pseudo-random number
generators (Marsaglia and Tsang 2000; Matsumoto and Nishimura 1998),
(ii) chaotic method (Dong et al. 2012; Gao and Wang 2007; Gao and
Liu 2012), (iii) low discrepancy method including Halton, Sobol, Niederreiter,
Hammersley, and Faure sequences, (iv) Latin hypercube (McKay et al.
1979), (v) quasi-oppositional differential evolution (Rahnamayan et
al. 2006; Rahnamayan et al. 2008), and (vi) centroidal Voronoi tessellation
(Du et al. 2010). 
\item \emph{Selection of a starting point for the local search}. A point
is considered to be a start point if it is not too close to some already
located minimum or another sample, whereby the closeness with a local
minimum or some other sample is guided through the so-called typical
distance (Tsoulos and Lagaris 2006). 
\item \emph{Local optimization method}. Several optimization software packages
for nonlinear least squares and general function minimization are
implemented. These include (i) the BFGS algorithm of Byrd et al. (1995),
(ii) the TOLMIN algorithm of Powell (1989), (iii) the DQED algorithm
due to Hanson and Krogh (1992), and (iv) the optimization algorithms
implemented in the Portable, Outstanding, Reliable and Tested (PORT)
library. In the latter case, a trust-region method in conjunction
with a Gauss-Newton and a Quasi-Newton model are used to compute the
trial step (Dennis Jr. et al. 1981a; Dennis Jr. et al. 1981b). These
deterministic optimization algorithms can be used in conjunction with
several stochastic solvers, as for example: (i) evolutionary strategy,
(ii) genetic algorithms, and (iii) simulated annealing. 
\item \emph{Stopping rule}. The algorithm must stop when all minima have
been collected with certainty. As default, (i) Bayesian stopping rules
(Zieli\'{n}ski 1981; Boender and Kan 1987; Boender and Romeijn 1995),
and (ii) the double-box stopping rule proposed by Lagaris and Tsoulos
(2008) are implemented. However, because the Bayesian and the double-box
stopping rule are not very efficient for this type of applications
(in order to capture a large number of solutions, either the tolerances
of the stopping rules should be very small or the number of local
searches should be extremely large) we adopted an additional termination
criterion: if the number of solutions does not change within a prescribed
number of iteration steps $k^{\star}$, the algorithm stops.
\end{enumerate}
To analyze planar central and balanced configurations for the $n$-body
problem, the stochastic optimization algorithm is used with $N=2n$
and $M=2n$, and is adapted as follows.
\begin{enumerate}
\item According to Eq. (\ref{eq:Alex1}) and for $\mathbf{S}=\text{diag}(\sigma_{x},\sigma_{y})$,
the functions that determine the objective function $F(\mathbf{q})$
are 
\begin{align}
f_{2i-1}^{(n)}(\mathbf{m},\mathbf{q}) & =\sum_{\substack{j=1\\
j\not=i
}
}^{n}m_{j}\frac{x_{j}-x_{i}}{||\mathbf{q}_{j}-\mathbf{q}_{i}||^{3}}+U_{n}(\mathbf{m},\mathbf{q})\sigma_{x}x_{i},\label{eq:Fx}\\
f_{2i}^{(n)}(\mathbf{m},\mathbf{q}) & =\sum_{\substack{j=1\\
j\not=i
}
}^{n}m_{j}\frac{y_{j}-y_{i}}{||\mathbf{q}_{j}-\mathbf{q}_{i}||^{3}}+U_{n}(\mathbf{m},\mathbf{q})\sigma_{y}y_{i},\label{eq:Fy}
\end{align}
for $i=1,\ldots,n$. 
\item For $\mathbf{q}_{i}=(x_{i},y_{i})^{T}$ and in view of the normalization
condition for the moment of inertia $I_{\mathbf{S}}(\mathbf{m},\mathbf{q})=1$,
i.e., $\sum_{i=1}^{n}m_{i}\mathbf{q}_{i}^{T}\mathbf{S}\mathbf{q}_{i}=1$,
the following simple bounds on the variables: 
\begin{equation}
-l_{xi}\leq x_{i}\leq l_{xi},\,\,\,-l_{yi}\leq y_{i}\leq l_{yi}\label{eq:SB1}
\end{equation}
with 
\begin{equation}
l_{xi}=\frac{1}{\sqrt{m_{i}\sigma_{x}}},\,\,\,l_{yi}=\frac{1}{\sqrt{m_{i}\sigma_{y}}},\label{eq:SB2}
\end{equation}
are imposed.
\item To specify the set of distinct solutions $Q$, we take into account
that for a central configuration, if $\mathbf{q}$ is a solution,
then any (i) permuted solution $\mathcal{P}\mathbf{q}$, (ii) rotated
solution of angle $\alpha$, $\mathcal{R}_{\alpha}\mathbf{q}$, and
(iii) conjugated solutions $\mathcal{C}_{x}\mathbf{q}$ and $\mathcal{C}_{y}\mathbf{q}$
are also solutions. Here, $\mathcal{P}$ and $\mathcal{R}_{\alpha}$
are the permutation and the rotation operator of angle $\alpha$,
respectively, while $\mathcal{C}_{x}$ and $\mathcal{C}_{y}$ stand
for the reflection operators with respect to the $x$- and $y$-axis,
respectively.\textcolor{red}{{} }For a balanced configuration, if $\mathbf{q}$
is a solution, then (i) any permuted solution, (ii) a solution rotated
by $\alpha=\pi$, and (iii) any conjugated solutions are also solutions. 
\item The decision that a solution $\mathbf{q}$, computed by means of a
local optimization method will be included in the set of (distinct)
solutions $Q=\{\mathbf{q}_{i}\}_{i=1}^{N_{\textrm{sol}}}$ is taken
according to the following rule: if (i) the objective function at
$\mathbf{q}$ is smaller than a prescribed tolerance and (ii) the
ordered set of mutual distances $\{R_{ij}\}$ corresponding to $\mathbf{q}$
does not coincides with the ordered set of mutual distances $\{R_{ij}^{\prime}\}$
corresponding to any $\mathbf{q}'\in Q$, then $\mathbf{q}$ is inserted
in the set of solutions $Q$. 
\item In the post-processing stage, several solution tests have been incorporated.
These are related to the fulfillment of the normalization condition
for the moment of inertia, the center-of-mass equation, the Albouy-Chenciner
equations (Albouy and Chenciner 1998), the Morse equality, and the
uniqueness of the solutions. Note that the solution uniqueness is
checked by means of an approach based on the Krawczyk operator method
(Lee and Santoprete 2009; Moczurad and Zgliczynski 2019, 2020). 
\end{enumerate}

\section{Planar central and balanced configurations in the $(n+1)$-body problem
with a small mass\label{subsec:Balanced-configurations-for}}

Let $(\mathbf{q}_{1}^{T},\ldots,\mathbf{q}_{n}^{T},\mathbf{q}_{n+1}^{T})^{T}$
be a normalized $\text{BC}(\mathsf{\mathbf{S}})$ for the $(n+1)$-body
problem with masses $(m_{1},\ldots,m_{n},m_{n+1})^{T}$, that is,
the configuration satisfies the relative equilibrium equations 
\begin{align}
\mathbf{f}_{i}^{(n+1)}(\mathbf{m},m_{n+1},\mathbf{q},\mathbf{q}_{n+1}): & =\sum_{\substack{j=1\\
j\not=i
}
}^{n+1}\frac{m_{j}}{||\mathbf{q}_{j}-\mathbf{q}_{i}||^{3}}(\mathbf{q}_{j}-\mathbf{q}_{i})\nonumber \\
 & +U_{n+1}(\mathbf{m},m_{n+1},\mathbf{q},\mathbf{q}_{n+1})\mathbf{S}\mathbf{q}_{i}\nonumber \\
 & =\boldsymbol{0},\label{eq:N1c}
\end{align}
for all $i=1,\ldots,n+1$, where $\mathbf{m}=(m_{1},\ldots,m_{n})^{T}$,
$\mathbf{q}=(\mathbf{q}_{1}^{T},\ldots,\mathbf{q}_{n}^{T})^{T}$,
and 
\begin{equation}
U_{n+1}(\mathbf{m},m_{n+1}\mathbf{q},\mathbf{q}_{n+1})=U_{n}(\mathbf{m},\mathbf{q})+\sum_{i=1}^{n}\frac{m_{i}m_{n+1}}{||\mathbf{q}_{n+1}-\mathbf{q}_{i}||}.\label{eq:N1d}
\end{equation}
In particular, for $i=1,\ldots,n$, the system of equations (\ref{eq:Alex1})
reads as 

\begin{align}
\mathbf{f}_{i}^{(n+1)}(\mathbf{m},m_{n+1},\mathbf{q},\mathbf{q}_{n+1}):= & \frac{m_{n+1}}{||\mathbf{q}_{n+1}-\mathbf{q}_{i}||^{3}}(\mathbf{q}_{n+1}-\mathbf{q}_{i})+\sum_{\substack{j=1\\
j\not=i
}
}^{n}\frac{m_{j}}{||\mathbf{q}_{j}-\mathbf{q}_{i}||^{3}}(\mathbf{q}_{j}-\mathbf{q}_{i})\nonumber \\
 & +U_{n+1}(\mathbf{m},m_{n+1},\mathbf{q},\mathbf{q}_{n+1})\mathbf{S}\mathbf{q}_{i}\nonumber \\
 & =\boldsymbol{0},\label{eq:N1a}
\end{align}
while for $i=n+1$, we have 
\begin{align}
\mathbf{f}_{n+1}^{(n+1)}(\mathbf{m},m_{n+1},\mathbf{q},\mathbf{q}_{n+1}): & =\sum_{j=1}^{n}\frac{m_{j}}{||\mathbf{q}_{j}-\mathbf{q}_{n+1}||^{3}}(\mathbf{q}_{j}-\mathbf{q}_{n+1})\nonumber \\
 & +U_{n+1}(\mathbf{m},m_{n+1}\mathbf{q},\mathbf{q}_{n+1})\mathbf{S}\mathbf{q}_{n+1}\nonumber \\
 & =\boldsymbol{0}.\label{eq:N1b}
\end{align}

The problem that we intend to solve consists in the computation of
the balanced and central configurations when the masses $m_{1},\ldots,m_{n},m_{n+1}$
are specified and $m_{n+1}\ll m_{i}$ for all $i=1,\ldots,n$. 

A first option is to solve the $(n+1)$-body problem directly by using
the above stochastic optimization algorithm. However in this case,
the standard algorithm should be slightly changed. The reason for
this change is that the bounds $l_{x,n+1}$ and $l_{y,n+1}$ computed
as in Eq. (\ref{eq:SB2}) are very large (because $m_{n+1}$ is very
small), and so, a large number of sampling points is required to find
as many solutions as possible. Instead, the choice $l_{x,n+1}=2\max_{i=1,\ldots,n}\{l_{xi}\}$
and $l_{y,n+1}=2\max_{i=1,\ldots,n}\{l_{yi}\}$ leads to a substantial
reduction of the computational time. 

A second option is to design a numerical algorithm relying on an analytic-continuation
result. Let $\mathbf{q}=(\mathbf{q}_{1}^{T},\ldots,\mathbf{q}_{n}^{T})^{T}$
be a normalized $\text{BC}(\mathsf{\mathbf{S}})$ with masses $\mathbf{m}=(m_{1},\ldots,m_{n})^{T}$,
i.e., the configuration $\mathbf{q}$ satisfies the relative equilibrium
equations 
\begin{equation}
\mathbf{f}_{i}^{(n)}(\mathbf{m},\mathbf{q})=\boldsymbol{0},\label{eq:X0}
\end{equation}
for all $i=1,\ldots,n$. Consider the function 
\begin{equation}
V_{\mathbf{S}n}(\mathbf{m},\mathbf{q},\mathbf{q}_{n+1})=\sum_{j=1}^{n}\frac{m_{j}}{||\mathbf{q}_{j}-\mathbf{q}_{n+1}||}+\frac{1}{2}U_{n}(\mathbf{m},\mathbf{q})\mathbf{q}_{n+1}^{T}\mathbf{S}\mathbf{q}_{n+1}.\label{eq:X1}
\end{equation}

\begin{defn}
A critical point $\mathbf{q}_{n+1}$ of $V_{\mathbf{S}n}(\mathbf{m},\mathbf{q},\cdot)$,
i.e. 
\begin{equation}
\nabla_{\mathbf{q}_{n+1}}V_{\mathbf{S}n}(\mathbf{m},\mathbf{q},\mathbf{q}_{n+1})=\sum_{j=1}^{n}\frac{m_{j}}{||\mathbf{q}_{j}-\mathbf{q}_{n+1}||^{3}}(\mathbf{q}_{j}-\mathbf{q}_{n+1})+U_{n}(\mathbf{m},\mathbf{q})\mathbf{S}\mathbf{q}_{n+1}=\boldsymbol{0},\label{eq:X2}
\end{equation}
is called a $\text{BC}(\mathsf{\mathbf{S}})$ of the restricted $(n+1)$-body
problem. A $\text{BC}(\mathsf{\mathbf{S}})$ of the restricted $(n+1)$-body
problem $\mathbf{q}_{n+1}$ is called non-degenerate if $D_{\mathbf{q}_{n+1}}^{2}V_{\mathbf{S}n}(\mathbf{m},\mathbf{q},\mathbf{q}_{n+1})$
is of full rank $2$. 
\end{defn}

From Eqs. (\ref{eq:N1d})--(\ref{eq:N1b}), it is readily seen that
$U_{n+1}(\mathbf{m},0,\mathbf{q},\mathbf{q}_{n+1})=U_{n}(\mathbf{m},\mathbf{q})$,
and consequently, if $\mathbf{q}$ is a normalized $\text{BC}(\mathsf{\mathbf{S}})$
with masses $\mathbf{m}$, and $\mathbf{q}_{n+1}$ is a $\text{BC}(\mathsf{\mathbf{S}})$
of the restricted $(n+1)$-body problem, that
\begin{align}
\mathbf{f}_{i}^{(n+1)}(\mathbf{m},0,\mathbf{q},\mathbf{q}_{n+1}) & =\mathbf{f}_{i}^{(n)}(\mathbf{m},\mathbf{q})=\boldsymbol{0},\,\,\,i=1,\ldots,n,\label{eq:A1}\\
\mathbf{f}_{n+1}^{(n+1)}(\mathbf{m},0,\mathbf{q},\mathbf{q}_{n+1}) & =\nabla_{\mathbf{q}_{n+1}}V_{\mathbf{S}n}(\mathbf{m},\mathbf{q},\mathbf{q}_{n+1})=\boldsymbol{0}.\label{eq:A2}
\end{align}
The next result is a simplified statement of Proposition 1 given in
Xia (1991) adapted to the BC case. 
\begin{prop}
\label{prop:Prop4}Let $\boldsymbol{\mathbf{q}}_{0}=(\mathbf{q}_{01}^{T},\ldots,\mathbf{q}_{0n}^{T})^{T}$
be a non-degenerate normalized $\text{BC}(\mathsf{\mathbf{S}})$ for
the $n$-body problem with masses $\mathbf{m}=(m_{1},\ldots,m_{n})^{T}$
and $\mathbf{q}_{0,n+1}$ a non-degenerate $\text{BC}(\mathsf{\mathbf{S}})$
for the restricted $(n+1)$-body problem\emph{.} Then the configuration
$(\boldsymbol{\mathbf{q}}_{0}^{T},\boldsymbol{\mathbf{q}}_{0,n+1}^{T})^{T}=(\mathbf{q}_{01}^{T},\ldots,\mathbf{q}_{0n}^{T},\mathbf{q}_{0,n+1}^{T})^{T}$
for the masses $(\mathbf{m}^{T},0)^{T}=(m_{1},\ldots,m_{n},0)^{T}$
can be analytically continued to a normalized $\text{BC}(\mathsf{\mathbf{S}})$
for the $(n+1)$-body problem for any mass $m_{n+1}$ in an open neighborhood
of $0$. 
\end{prop}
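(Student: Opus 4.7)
The plan is to apply the implicit function theorem (IFT) with $m_{n+1}$ as the parameter. Define $\mathbf{F}:(\mathbb{R}^{2(n+1)}\setminus\Delta)\times\mathbb{R}\to\mathbb{R}^{2(n+1)}$ by stacking the left-hand sides of (\ref{eq:N1a})--(\ref{eq:N1b}), viewed as a function of the combined configuration $(\mathbf{q},\mathbf{q}_{n+1})$. By Eqs. (\ref{eq:A1})--(\ref{eq:A2}), $\mathbf{F}((\boldsymbol{\mathbf{q}}_0,\boldsymbol{\mathbf{q}}_{0,n+1}),0)=\boldsymbol{0}$, so the starting configuration is a solution at $m_{n+1}=0$. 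The goal is then to show that the partial Jacobian $J:=D_{(\mathbf{q},\mathbf{q}_{n+1})}\mathbf{F}$ at this point is invertible after quotienting the continuous symmetries of the system; IFT will then deliver an analytic branch of solutions parameterized by $m_{n+1}$ near $0$.

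The key computation is the structure of $J$ at $m_{n+1}=0$. Since $U_{n+1}(\mathbf{m},0,\mathbf{q},\mathbf{q}_{n+1})=U_n(\mathbf{m},\mathbf{q})$ is independent of $\mathbf{q}_{n+1}$ and the direct interaction term with the $(n+1)$-th body vanishes at $m_{n+1}=0$, the first $n$ equations $\mathbf{f}_i^{(n+1)}$ reduce to $\mathbf{f}_i^{(n)}(\mathbf{m},\mathbf{q})$ and depend only on $\mathbf{q}$. Consequently $J$ is block triangular:
\[
J\big|_{m_{n+1}=0}=\begin{pmatrix}A & \boldsymbol{0}\\ B & C\end{pmatrix},
\]
where $A$ is (up to pre-multiplication by $\mathrm{diag}(1/m_i)$) the $n$-body matrix $\mathbf{H}(\mathbf{q}_0)$ of Eq. (\ref{eq:A3}) and, by (\ref{eq:X2}), $C=D^2_{\mathbf{q}_{n+1}}V_{\mathbf{S}n}(\mathbf{m},\mathbf{q}_0,\mathbf{q}_{0,n+1})$. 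Invertibility of $J$ (on the appropriate slice) then reduces to invertibility of the two diagonal blocks: $C$ has full rank $2$ directly by the assumed non-degeneracy of the restricted BC $\mathbf{q}_{0,n+1}$, and, by Remark \ref{rem:By-following-Moczurad}, $\mathbf{H}(\mathbf{q}_0)$ has full rank $2n$ in Case 2 while in Case 1 it has rank $2n-1$ with kernel generated by the infinitesimal rotation of $\mathbf{q}_0$.

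With both diagonal blocks invertible on the appropriate slice, IFT yields an analytic map $m_{n+1}\mapsto(\mathbf{q}(m_{n+1}),\mathbf{q}_{n+1}(m_{n+1}))$ with $(\mathbf{q}(0),\mathbf{q}_{n+1}(0))=(\boldsymbol{\mathbf{q}}_0,\boldsymbol{\mathbf{q}}_{0,n+1})$ and $\mathbf{F}((\mathbf{q}(m_{n+1}),\mathbf{q}_{n+1}(m_{n+1})),m_{n+1})=\boldsymbol{0}$ on an open neighborhood of $0$, which is the desired analytic continuation. The main obstacle is bookkeeping the symmetries: the equations $\mathbf{F}=\boldsymbol{0}$ are invariant under translations, scalings, and (in Case 1) rotations, so solutions come in continuous families and $J$ has a corresponding kernel on the full ambient space. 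The normalization $\mathbf{c}=0$, $I_{\mathbf{S}}=1$ already cuts out translations and scalings; in Case 1 one must additionally fix an $\mathrm{O}(2)$-slice (equivalently, work on the quotient $\mathcal{N}(\mathbf{m},\mathbf{S})/\mathrm{O}(2)$ used to define $\widehat{U}_n$), and the equivalence of the two non-degeneracy definitions recorded in Remark \ref{rem:By-following-Moczurad} is precisely what is needed to conclude that, after this quotient, the upper-left block is invertible and the IFT applies. One finally verifies that the continued configuration remains normalized, either directly from the equations (since the BC relations force $\mathbf{c}=0$ once $I_{\mathbf{S}}=1$ is imposed) or by a rescaling within the orbit.
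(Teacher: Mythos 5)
Your argument for the balanced case $\sigma_x\neq\sigma_y$ is essentially the paper's: at $m_{n+1}=0$ the Jacobian of the stacked system is block lower-triangular, the lower-right block is $D^2_{\mathbf{q}_{n+1}}V_{\mathbf{S}n}$ (invertible by the restricted non-degeneracy), the upper-left block is the $n$-body Hessian (full rank $2n$ by Remark \ref{rem:By-following-Moczurad}), and the standard implicit function theorem finishes. That part is fine.

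The gap is in the central case $\sigma_x=\sigma_y$, which is where the real work lies. You correctly identify that the rotational direction kills invertibility and that one must ``fix an $\mathrm{O}(2)$-slice,'' but you leave the implementation as a gesture, and the gesture as stated does not go through: if you simply restrict the $2n+2$ (or, after eliminating $\mathbf{q}_{n+1}$, the $2n$) equations to a codimension-one slice of the rotation action, you are left with more equations than unknowns, and the implicit function theorem does not apply to an overdetermined system. What rescues the argument is the \emph{variational structure} of the reduced system, which your proposal never invokes. The paper's route is: first solve the $(n+1)$-th equation for $\mathbf{q}_{n+1}=\mathbf{g}_0(m_{n+1},\mathbf{q})$ by the ordinary IFT (this is where the restricted non-degeneracy enters), show via the slice theorem that $\mathbf{g}_0$ is $SO(2)$-equivariant, and then observe that the remaining $n$ equations are exactly $\nabla_{\mathbf{q}}\mathfrak{f}$ for the $SO(2)$-invariant function $\mathfrak{f}=U_{n+1}+\tfrac{\lambda}{2}I_{n+1}$ evaluated along $\mathbf{g}_0$. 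Only because the system is a gradient of an invariant function does the condition $\ker(D^2_{\mathbf{q}}\mathfrak{f}(0,\mathbf{q}_0))=T_{\mathbf{q}_0}(SO(2)\cdot\mathbf{q}_0)$ (supplied by the non-degeneracy hypothesis) let the equivariant implicit function theorem of Bettiol et al.\ produce the analytic branch. Note also that the relevant kernel at $m_{n+1}=0$ is the infinitesimal rotation of the \emph{combined} configuration $(\mathbf{q}_0,\mathbf{q}_{0,n+1})$, not just of $\mathbf{q}_0$; your description of the quotient only accounts for the action on the first $n$ bodies. To repair the proof you need to (i) make the elimination of $\mathbf{q}_{n+1}$ explicit, (ii) establish equivariance of the resulting map, and (iii) exhibit the reduced system as a gradient so that an equivariant (or Lyapunov--Schmidt type) IFT applies.
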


\begin{proof}
By hypothesis, we have that $\mathbf{f}_{i}^{(n)}(\mathbf{m},\mathbf{q}_{0})=\boldsymbol{0}$
for all $i=1,\ldots,n$, and $\nabla_{\mathbf{q}_{n+1}}V_{\mathbf{S}n}(\mathbf{m},\mathbf{q}_{0},\mathbf{q}_{0,n+1})=\boldsymbol{0}$.
The aim is to show that for $(\mathbf{m}^{T},m_{n+1})^{T}$ with $m_{n+1}\in U_{+}(0)$,
where $U_{+}(0)\subset\mathbb{R}_{+}$ is an open neighborhood of
$0$, there exist $\mathbf{q}$ and $\mathbf{q}_{n+1}$ in the neighborhoods
of $\mathbf{q}_{0}$ and $\mathbf{q}_{0,n+1}$, respectively, such
that $\mathbf{f}_{i}^{(n+1)}(\mathbf{m},m_{n+1},\mathbf{q},\mathbf{q}_{n+1})=\boldsymbol{0}$
for all $i=1,\ldots,n+1$. For 
\[
\mathbf{f}_{i}^{(n+1)}(\mathbf{m},m_{n+1},\mathbf{q},\mathbf{q}_{n+1}):=\sum_{\substack{j=1\\
j\not=i
}
}^{n+1}\frac{m_{j}}{||\mathbf{q}_{j}-\mathbf{q}_{i}||^{3}}(\mathbf{q}_{j}-\mathbf{q}_{i})+\lambda\mathbf{S}\mathbf{q}_{i}=\mathbf{0},
\]
with $\lambda=U_{n+1}(\mathbf{m},0,\mathbf{q},\mathbf{q}_{n+1})=U_{n}(\mathbf{m},\mathbf{q})$
fixed, consider the function 
\[
\mathbf{f}(m_{n+1},\mathbf{q},\mathbf{q}_{n+1}):\mathbb{R}_{+}\times\mathbb{R}^{2n}\times\mathbb{R}^{2}\rightarrow\mathbb{R}^{2n+2},
\]
defined by 
\begin{align}
\mathbf{f}(m_{n+1},\mathbf{q},\mathbf{q}_{n+1}) & =\left(\begin{array}{c}
\mathbf{f}_{1}^{(n+1)}(\mathbf{m},m_{n+1},\mathbf{q},\mathbf{q}_{n+1})\\
\vdots\\
\mathbf{f}_{n+1}^{(n+1)}(\mathbf{m},m_{n+1},\mathbf{q},\mathbf{q}_{n+1})
\end{array}\right).\label{eq:1-6}
\end{align}
From Eqs. (\ref{eq:A1})--(\ref{eq:A2}), we see that $\mathbf{f}_{i}^{(n+1)}(\mathbf{m},0,\mathbf{q}_{0},\mathbf{q}_{0,n+1})=\boldsymbol{0}$,
$i=1,\ldots,n$, and $\mathbf{f}_{n+1}^{(n+1)}(\mathbf{m},0,\mathbf{q}_{0},\mathbf{q}_{0,n+1})=\boldsymbol{0}$;
hence, $\mathbf{f}(0,\mathbf{q}_{0},\mathbf{q}_{0,n+1})=\boldsymbol{0}$.
Moreover, for $\sigma_{x}\neq\sigma_{y}$, it can be checked using
the non-degeneracy assumption together with Remark \ref{rem:By-following-Moczurad}
that the Jacobian $D_{(\mathbf{q},\mathbf{q}_{n+1})}\mathbf{f}(0,\mathbf{q}_{0},\mathbf{q}_{0,n+1})$
\begin{align*}
 & D_{(\mathbf{q},\mathbf{q}_{n+1})}\mathbf{f}(0,\mathbf{q}_{0},\mathbf{q}_{0,n+1})\\
 & =\left(\begin{array}{cc}
D_{\mathbf{q}}\mathbf{f}_{1}^{(n)}(\mathbf{m},\mathbf{q}_{0}) & \boldsymbol{0}\\
\vdots & \vdots\\
D_{\mathbf{q}}\mathbf{f}_{n}^{(n)}(\mathbf{m},\mathbf{q}_{0}) & \boldsymbol{0}\\
D_{\mathbf{q}}\mathbf{f}_{n+1}^{(n+1)}(\mathbf{m},0,\mathbf{q}_{\text{0}},\mathbf{q}_{0,n+1}) & (D_{\mathbf{q}_{n+1}}^{2}V_{\mathbf{S}n})(\mathbf{m},\mathbf{q}_{0},\mathbf{q}_{0,n+1})
\end{array}\right)
\end{align*}
is non-singular, i.e., $\textrm{rank}(D_{(\mathbf{q},\mathbf{q}_{n+1})}\mathbf{f}(0,\mathbf{q}_{0},\mathbf{q}_{0,n+1}))=2n+2$.
In this regard, according to the implicit function theorem, there
exist an open neighborhood of $0$, $U_{+}(0)\subset\mathbb{R}_{+}$,
and a function
\[
\mathbf{g}:U_{+}(0)\rightarrow\mathbb{R}^{2n}\times\mathbb{R}^{2}
\]
such that $(\mathbf{q}_{0},\mathbf{q}_{0,n+1})=\mathbf{g}(0)$ and
\[
\mathbf{f}(m_{n+1},\mathbf{g}(m_{n+1}))=\boldsymbol{0}\textrm{ for all }m_{n+1}\in U_{+}(0).
\]
Thus, $\mathbf{f}_{i}^{(n+1)}(\mathbf{m},m_{n+1},\mathbf{g}(m_{n+1})=\boldsymbol{0}$
for all $m_{n+1}\in U_{+}(0)$ and $i=1,\ldots,n+1$. 

In the case of central configurations ($\sigma_{x}=\sigma_{y}$),
a direct application of the implicit function theorem has to involve
an additional reduction procedure, sinces the rotational direction
is the kernel of $D_{(\mathbf{q},\mathbf{q}_{n+1})}\mathbf{f}(0,\mathbf{q}_{0},\mathbf{q}_{0,n+1})$.
Instead, here we apply an equivariant version of the implicit function
theorem as in (Bettiol et al. (2014)). To prove the Proposition, we
first consider the $(n+1)$th equation of the $(n+1)$-body problem,
i.e., 
\[
\mathbf{f}_{n+1}^{(n+1)}(\mathbf{m},m_{n+1},\mathbf{q},\mathbf{q}_{n+1})=\sum_{j=1}^{n}\frac{m_{j}}{||\mathbf{q}_{j}-\mathbf{q}_{n+1}||^{3}}(\mathbf{q}_{j}-\mathbf{q}_{n+1})+\lambda\mathbf{S}\mathbf{q}_{n+1}.
\]
Since the Jacobian of $\mathbf{f}_{n+1}^{n+1}$ with respect to $\mathbf{q}_{n+1}$
is assumed to be non-degenerate, the implicit function theorem provides
a unique real analytic map $\mathbf{g}_{0}:U_{+}(0)\times U(\mathbf{q}_{0})\rightarrow\mathbb{R}^{2}$,
where $U_{+}(0)\subset\mathbb{R}_{+}$ and $U(\mathbf{q}_{0})\subset\mathbb{R}^{2n}$
are neighborhoods of $0$ and $\mathbf{q}_{0}$, respectively, such
that $\mathbf{g}_{0}(0,\mathbf{q}_{0})=\mathbf{q}_{0,n+1}$ and 
\begin{equation}
\mathbf{f}_{n+1}^{(n+1)}(\mathbf{m},m_{n+1},\mathbf{q},\mathbf{g}_{0}(\mathbf{q}))=0,\label{eq:1-1}
\end{equation}
or equivalently, 
\begin{equation}
\nabla_{\mathbf{q}_{n+1}}U_{n+1}(\mathbf{m},m_{n+1},\mathbf{q},\mathbf{g}_{0}(m_{n+1},\mathbf{q}))+\frac{\lambda}{2}\nabla_{\mathbf{q}_{n+1}}I_{n+1}(\mathbf{m},m_{n+1},\mathbf{q},\mathbf{g}_{0}(m_{n+1},\mathbf{q}))=0,\label{eq:1-2}
\end{equation}
where 
\[
I_{n+1}(\mathbf{m},m_{n+1},\mathbf{q},\mathbf{q}_{n+1})=\sum_{i=1}^{n}m_{i}\mathbf{q}_{i}^{T}\mathbf{q}_{i}+m_{n+1}\mathbf{q}_{n+1}^{T}\mathbf{q}_{n+1}.
\]
Moreover, using the slice theorem it is possible to show that the
map $\mathbf{g}_{0}$ extends to an equivariant map with respect to
the $SO(2)$ diagonal action defined on $U_{+}(0)\times\widehat{U}(\mathbf{q}_{0})$,
where $\widehat{U}(\mathbf{q}_{0})$ is a $SO(2)-$invariant neighborhood
of the orbit $SO(2)\mathbf{q}_{0}$, i.e., for all $O\in SO(2)$ and
all $(m_{n+1},\mathbf{q})\in U_{+}(0)\times\widehat{U}(\mathbf{q}_{0})$,
we have $\mathbf{g}_{0}(m_{n+1},O\mathbf{q})=O\mathbf{g}_{0}(m_{n+1},\mathbf{q})$.
Next, for the first $n$ equations of the $(n+1)$-body problem, we
set
\begin{equation}
\mathbf{f}(m_{n+1},\mathbf{q})=\left(\begin{array}{c}
\mathbf{f}_{1}^{(n+1)}(\mathbf{m},m_{n+1},\mathbf{q},\mathbf{g}_{0}(m_{n+1},\mathbf{q}))\\
\vdots\\
\mathbf{f}_{n}^{(n+1)}(\mathbf{m},m_{n+1},\mathbf{q},\mathbf{g}_{0}(m_{n+1},\mathbf{q}))
\end{array}\right).\label{eq:1-3}
\end{equation}
$\mathbf{f}(m_{n+1},\mathbf{q})$ has an $SO(2)$ symmetry, and therefore,
the kernel of the Jacobian of $\text{\ensuremath{\mathbf{f}}(\ensuremath{m_{n+1}},\ensuremath{\mathbf{q}})}$
with respect to $\mathbf{q}$ is the rotational direction. Because
the implicit function theorem cannot be applied directly, we use an
equivariant version of the implicit function theorem due Bettiol et
al. (2014). For doing this, we consider the function 
\[
\mathfrak{f}:U_{+}(0)\times\widehat{U}(\mathbf{q}_{0})\rightarrow\mathbb{R},
\]
defined by 
\[
\mathfrak{f}(m_{n+1},\mathbf{q}):=U_{n+1}(\mathbf{m},m_{n+1},\mathbf{q},\mathbf{g}_{0}(m_{n+1},\mathbf{q}))+\frac{\lambda}{2}I_{n+1}(\mathbf{m},m_{n+1},\mathbf{q},\mathbf{g}_{0}(m_{n+1},\mathbf{q})).
\]
By Eq. (\ref{eq:1-2}) and the definition of $I_{n+1}$, we have
\begin{align}
 & \nabla_{\mathbf{q}}\mathfrak{f}(m_{n+1},\mathbf{q})\nonumber \\
 & =\nabla_{\mathbf{q}}U_{n+1}(\mathbf{m},m_{n+1},\mathbf{q},\mathbf{g}_{0}(m_{n+1},\mathbf{q}))+\frac{\lambda}{2}\nabla_{\mathbf{q}}I_{n+1}(\mathbf{m},m_{n+1},\mathbf{q},\mathbf{g}_{0}(m_{n+1},\mathbf{q}))\nonumber \\
 & +\left(\frac{\partial\mathbf{g}_{0}}{\partial\mathbf{q}}\right)^{T}\left(\nabla_{\mathbf{q}_{n+1}}U_{n+1}(\mathbf{m},m_{n+1},\mathbf{q},\mathbf{g}_{0}(m_{n+1},\mathbf{q}))\right.\nonumber \\
 & \left.+\frac{\lambda}{2}\nabla_{\mathbf{q}_{n+1}}I_{n+1}(\mathbf{m},m_{n+1},\mathbf{q},\mathbf{g}_{0}(m_{n+1},\mathbf{q}))\right)\nonumber \\
 & =\nabla_{\mathbf{q}}U_{n+1}(\mathbf{m},m_{n+1},\mathbf{q},\mathbf{g}_{0}(m_{n+1},\mathbf{q}))+\frac{\lambda}{2}\nabla_{\mathbf{q}}I_{n+1}(\mathbf{m},m_{n+1},\mathbf{q},\mathbf{g}_{0}(m_{n+1},\mathbf{q}))\label{eq:1-4}
\end{align}
and
\begin{align}
D_{\mathbf{q}}^{2}\mathfrak{f}(m_{n+1},\mathbf{q}) & =D_{\mathbf{q}}^{2}U_{n+1}(\mathbf{m},m_{n+1},\mathbf{q},\mathbf{g}_{0}(m_{n+1},\mathbf{q}))\nonumber \\
 & +\frac{\lambda}{2}D_{\mathbf{q}}^{2}I_{n+1}(\mathbf{m},m_{n+1},\mathbf{q},\mathbf{g}_{0}(m_{n+1},\mathbf{q}))\nonumber \\
 & +m_{n+1}\mathbf{G}(\mathbf{q}),\label{eq:1-5}
\end{align}
where $\mathbf{G}(\mathbf{q})$ is a $2n\times2n$ matrix valued analytic
function. Inspecting Eqs. (\ref{eq:1-3}) and (\ref{eq:1-4}), we
find $\nabla_{\mathbf{q}}\mathfrak{f}(m_{n+1},\mathbf{q})=\mathbf{f}(m_{n+1},\mathbf{q})$,
yielding $D_{\mathbf{q}}^{2}\mathfrak{f}(m_{n+1},\mathbf{q})=D_{\mathbf{q}}\mathbf{f}(m_{n+1},\mathbf{q})$.
The non-degeneracy assumption in conjunction with Eq. (\ref{eq:1-5})
implies that the requirement of the implicit function theorem in (Bettiol
et al. (2014)), that is, 
\[
\ker\left(D_{\mathbf{q}}^{2}\mathfrak{f}(0,\mathbf{q}_{0})\right)=T_{\mathbf{q}_{0}}\left(SO(2)\cdot\mathbf{q}_{0}\right)
\]
is fulfilled. As a result, there exists a real analytic map $\mathbf{g}:U_{+}(0)\rightarrow\mathbb{R}^{2n}$
such that $\mathbf{g}(0)=\mathbf{q}_{0}$ and $(m_{n+1},\mathbf{g}(m_{n+1}),\mathbf{g}_{0}(m_{n+1},\mathbf{g}(m_{n+1})))\in\mathbb{R}_{+}\times\mathbb{R}^{2n}\times\mathbb{R}^{2}$
is a central configuration for all $m_{n+1}\in U_{+}(0)$. 
\end{proof}
From a computational point of view, Proposition \ref{prop:Prop4}
shows that a solution of the $(n+1)$-body problem with given masses
$(\mathbf{m}^{T},m_{n+1})^{T}=(m_{1},\ldots,m_{n},m_{n+1})^{T}$ and
$m_{n+1}\ll m_{i}$ for all $i=1,\ldots,n$, exists in a neighborhood
of a configuration formed by the solutions of the $n$-body problem
and the restricted $(n+1)$-body problem. In other words, for given
masses $(\mathbf{m}^{T},m_{n+1})^{T}=(m_{1},\ldots,m_{n},m_{n+1})^{T}$
with $m_{n+1}\ll m_{i}$ for all $i=1,\ldots,n$, $(\boldsymbol{\mathbf{q}}_{0}^{T},\boldsymbol{\mathbf{q}}_{0,n+1}^{T})^{T}=(\mathbf{q}_{01}^{T},\ldots,\mathbf{q}_{0n}^{T},\mathbf{q}_{0n+1}^{T})^{T}$
can be regarded as an ``approximate'' $\text{BC}(\mathsf{\mathbf{S}})$
for the $(n+1)$-body problem, while an ``exact'' $\text{BC}(\mathsf{\mathbf{S}})$
can be obtained by solving the $(n+1)$-body problem (\ref{eq:N1a})--(\ref{eq:N1b})
in a neighborhood of $(\boldsymbol{\mathbf{q}}_{0}^{T},\boldsymbol{\mathbf{q}}_{0,n+1}^{T})^{T}=(\mathbf{q}_{01}^{T},\ldots,\mathbf{q}_{0n}^{T},\mathbf{q}_{0n+1}^{T})^{T}$. 

\begin{algorithm}
\caption{Analytic-continuation algorithm for computing balanced and central
configurations for the $(n+1)$-body problem with a small mass.\label{alg:Algorithm1}}

\begin{description}
\item [{Step}] 1. Compute all solutions $\mathbf{q}_{0}^{(k)}=(\mathbf{q}_{01}^{(k)T},\ldots,\mathbf{q}_{0n}^{(k)T})^{T}\in\mathbb{R}^{2n}$,
$k=1,\ldots,N_{\textrm{sol}}(n)$ of the $n$-body problem (\ref{eq:Alex1}).
\item [{Step}] 2. For each $k$th solution $\mathbf{q}_{0}^{(k)}$, compute
all solutions $\mathbf{q}_{0n+1}^{(k,l)}\in\mathbb{R}^{2}$, $l=1,\ldots,N_{\textrm{sol}}(k,n)$
of the\emph{ }restricted $(n+1)$-body problem (\ref{eq:X2}).
\item [{Step}] 3. For each initial guess
\[
\mathbf{q}_{0}^{(k,l)}=(\mathbf{q}_{0}^{(k)T},\mathbf{q}_{0n+1}^{(k,l)T})^{T}=(\mathbf{q}_{01}^{(k)T},\ldots,\mathbf{q}_{0n}^{(k)T},\mathbf{q}_{0n+1}^{(k,l)T})^{T}\in\mathbb{R}^{2n+2},
\]
 solve the $(n+1)$-body problem (\ref{eq:N1a})--(\ref{eq:N1b})
for 
\[
\mathbf{q}^{(k,l)}=(\mathbf{q}^{(k)T},\mathbf{q}_{n+1}^{(k,l)T})^{T}=(\mathbf{q}_{1}^{(k)T},\ldots,\mathbf{q}_{n}^{(k)T},\mathbf{q}_{n+1}^{(k,l)T})^{T}
\]
 in a neighborhood of $\mathbf{q}_{0}^{(k,l)}$, e.g., in a box 
\begin{align*}
B_{\delta}(\mathbf{q}_{0}^{(k,l)}) & =\{\mathbf{q}^{(k,l)}\mid|x_{i}^{(k,l)}-x_{0i}^{(k,l)}|\leq\delta|x_{0i}^{(k,l)}|,\\
 & \,|y_{i}^{(k,l)}-y_{0i}^{(k,l)}|\leq\delta|y_{0i}^{(k,l)}|,\,i=1,\ldots,n+1\},
\end{align*}
where $\mathbf{q}_{i}^{(k,l)}=(x_{i}^{(k,l)},y_{i}^{(k,l)})^{T}$,
$\mathbf{q}_{0i}^{(k,l)}=(x_{0i}^{(k,l)},y_{0i}^{(k,l)})^{T}$, and
$\delta$ is sufficiently small.
\end{description}
\end{algorithm}

Algorithm \ref{alg:Algorithm1} is a practical implementation of this
analytic-continuation result. It is fully based on the stochastic
optimization algorithm, which is used in Step 1 to compute all solutions
of the $n$-body problem, and in Step 2 to compute all solutions of
the restricted $(n+1)$-body problem. In the second case, corresponding
to $N=M=2$, we 
\begin{enumerate}
\item impose the simple bounds on the variables (\ref{eq:SB1}), where 
\[
l_{x,n+1}=2\max_{i=1,\ldots,n}\{l_{xi}\},\,\,\,l_{y,n+1}=2\max_{i=1,\ldots,n}\{l_{yi}\},
\]
and $l_{xi}$ and $l_{yi}$ are given by Eq. (\ref{eq:SB2}), 
\item adopt as sampling method, the pseudo-random number generator method,
and 
\item assume for simplicity, that $\mathbf{q}_{1}$ and $\mathbf{q}_{2}$
belong to the set of distinct solutions $Q$ if $\mathbf{q}_{1}\neq\mathbf{q}_{2}$. 
\end{enumerate}
Thus, in principle, we do not exclude from the final set of solutions,
the rotated and reflected solutions. Finally, in Step 3, only the
local search procedure $\mathbf{q}^{(k,l)}=\mathcal{L}(\mathbf{q}_{0}^{(k,l)})$
is applied, and a solution $\mathbf{q}^{(k,l)}$ is accepted if the
objective function is below a prescribed tolerance.

The main peculiarities of the algorithm are that (i) the initial guess
$\mathbf{q}_{0}^{(k,l)}=(\mathbf{q}_{0}^{(k)T},\mathbf{q}_{0n+1}^{(k,l)T})^{T}$
corresponds to the case $m_{n+1}=0$ (hence, it does not depend on
the mass $m_{n+1}$), and (ii) $B_{\delta}(\mathbf{q}_{0}^{(k,l)})$
is a small box around $\mathbf{q}_{0}^{(k,l)}$. As a result,
\begin{enumerate}
\item the number of solutions of the $(n+1)$-body problem
\[
N_{\textrm{sol}}(n+1)=\sum_{k=1}^{N_{\textrm{sol}}(n)}N_{\textrm{sol}}(k,n)
\]
is independent on $m_{n+1}$,
\item the set of solution $\{\mathbf{q}^{(k,l)}\}$ corresponds to a sufficiently
small value of $m_{n+1}$,
\item the solution $\mathbf{q}^{(k,l)}=(\mathbf{q}^{(k)T},\mathbf{q}_{n+1}^{(k,l)T})^{T}\in B_{\delta}(\mathbf{q}_{0}^{(k,l)})$
is close to the initial guess $\mathbf{q}_{0}^{(k,l)}=(\mathbf{q}_{0}^{(k)T},\mathbf{q}_{0n+1}^{(k,l)T})^{T}$,
whereby $\mathbf{q}^{(k)}$ is near the solution of the $n$-body
problem $\mathbf{q}_{0}^{(k)}$, and $\mathbf{q}_{n+1}^{(k,l)}$ is
near the solution of the restricted $(n+1)$-body problem $\mathbf{q}_{0n+1}^{(k,l)}$,
and
\item $||\mathbf{q}^{(k,l)}-\mathbf{q}_{0}^{(k,l)}||\rightarrow0$ as $m_{n+1}\rightarrow0$;
\end{enumerate}
Actually, the solution $\mathbf{q}^{(k,l)}$ belongs to the box $B_{\delta}(\mathbf{q}_{0}^{(k,l)})$,
and there is no guarantee that for example, when $m_{n+1}$ is above
an upper bound $\overline{m}_{n+1}$, there are no other solutions
outside the domain $\cup_{k,l}B_{\delta}(\mathbf{q}_{0}^{(k,l)})$.
However, the analytic-continuation method is more efficient than the
direct method. The reason is that two optimization problems of lower
dimensions are solved consecutively (the dimensions are $(M=2n,N=2n)$
in Step 1 and $(M=2,N=2)$ in Step 2), and therefore, the number of
sampling points required to capture a large number of solutions is
smaller than in the case of the direct method. 

\section{Numerical simulations}

The goal of our numerical analysis is twofold. First, to illustrate
some central and balanced configurations for the $(n+1)$-body problem
with a small mass, and second, to analyze the accuracy and efficiency
of the analytic-continuation algorithm, and in particular, to provide
a numerical verification of the analytic-continuation method, according
to which, a solution of the $(n+1)$-body problem exists in a neighborhood
of a configuration formed by the solutions of the $n$-body problem
and the restricted $(n+1)$-body problem.

In our simulations we choose $m_{i}=m=0.1$, for $i=1,\ldots,n$,
and $m_{n+1}=\varepsilon m$, where the mass parameter $\varepsilon$
takes the values $10^{-8}$, $10^{-9}$, and $10^{-10}$. For central
configurations we set $\sigma_{x}=\sigma_{y}=1.0$, while for balanced
configurations we use $\sigma_{x}=1.0$ and $\sigma_{y}=0.3$. The
parameter $\delta$ specifying the dimension of the box $B_{\delta}(\mathbf{q}_{0}^{(k,l)})$
in Step 3 of Algorithm \ref{alg:Algorithm1} is $5\times10^{-2}$.
For the $n$-body problem, the number of sample points is $N_{\textrm{s}}=10^{6}$,
the number of iteration steps within the number of solutions does
not change is $k^{\star}=200$, and the sampling method is a Faure
sequence. For the restricted $(n+1)$-body problem, the number of
sample points is $N_{\textrm{s}}=5\times10^{4}$. In the case of the
$(n$+1)-body problem solved by the direct method, the number of sample
points is $N_{\textrm{s}}=9\times10^{6}$, the number of iteration
steps within the number of solutions does not change is $k^{\star}=3000$,
and the sampling method is a chaotic method. Note that this large
number of sample points is required in order to capture as many solutions
as possible. 

The deviation of the solution $\mathbf{q}^{(k,l)}$ of the $(n+1)$-body
problem computed by the analytic-continuation method with respect
to the initial guess $\mathbf{q}_{0}^{(k,l)}$, where $k=1,\ldots,N_{\textrm{sol}}(n)$
and $l=1,\ldots,N_{\mathrm{sol}}(k,n)$, is characterized through
the RMS of the absolute error in Cartesian coordinates
\[
\Delta q_{0kl}=\sqrt{\frac{1}{(n+1)}\sum_{i=1}^{n+1}||\mathbf{q}_{i}^{(k,l)}-\mathbf{q}_{0i}^{(k,l)}||^{2},}
\]
and the average RMS 
\[
\Delta q_{0}=\sqrt{\frac{1}{N_{\textrm{sol}}(n)}\sum_{k=1}^{N_{\mathrm{sol}}(n)}\left(\frac{1}{N_{\textrm{sol}}(k,n)}\sum_{l=1}^{N_{\textrm{sol}}(k,n)}\Delta q_{0kl}^{2}\right).}
\]
The deviation of the solution $\mathbf{\widehat{q}}^{(m)}$ of the
$(n+1)$-body problem computed by the direct method with respect to
an analytic-continuation solution $\mathbf{q}^{(k,l)}$, where $m=1,\ldots,\widehat{N}_{\mathrm{sol}}(n+1)$
and $\widehat{N}_{\mathrm{sol}}(n+1)$ is the number of solutions,
is quantified as follows. For each $\mathbf{\widehat{q}}^{(m)}$,
we determine the corresponding solution $\mathbf{q}^{(k_{0},l_{0})}$
computed by the analytic-continuation method as
\[
(k_{0},l_{0})=\arg\min_{(k,l)}\sum_{1\leq i<j\leq n}|\widehat{R}_{ij}^{(m)}-R_{ij}^{(k,l)}|^{2},
\]
and accordingly, calculate the RMS of the absolute error in radial
distances 
\[
\Delta R_{m}=\sqrt{\frac{1}{\mathcal{N}}\sum_{1\leq i<j\leq n}|\widehat{R}_{ij}^{(m)}-R_{ij}^{(k_{0},l_{0})}|^{2}},
\]
and the average RMS 
\[
\Delta R=\sqrt{\frac{1}{\widehat{N}_{\mathrm{sol}}(n+1)}\sum_{m=1}^{\widehat{N}_{\mathrm{sol}}(n+1)}\Delta R_{m}^{2}},
\]
where in general, the $R_{ij}$ are the mutual distances of the configuration
$\mathbf{q}$, and $\mathcal{N}=n(n+1)/2$. 

The results of our numerical analysis are available at website: \emph{https://github.com}\\
\emph{/AlexandruDoicu/Central-and-Balanced-Configurations-with-a-small-mass.}
The simulations were performed on a computer Intel Core x86\_64 CPU
2.70GHz. The output files contain the following data.
\begin{enumerate}
\item Results related to the analytic-continuation method:
\begin{enumerate}
\item the number of solutions of the $n$-body problem $N_{\textrm{sol}}(n)$,
and for each $k$th solution of the $n$-body problem, the number
of solutions of the restricted $(n+1)$-body problem $N_{\textrm{sol}}(k,n)$; 
\item the (total) number of solutions of the $(n+1)$-body problem $N_{\textrm{sol}}(n+1)$
and the number of distinct solutions excluding symmetries $N_{\textrm{sol}}^{0}(n+1)$;
\item for each configuration $\mathbf{q}^{(k,l)}$ of the $(n+1)$-body
problem, where $k=1,\ldots,N_{\textrm{sol}}(n)$ and $l=1,\ldots,N_{\mathrm{sol}}(k,n)$:
(i) the Cartesian coordinates of the point masses, (ii) the residual
of the relative equilibrium equations, and (iii) the RMS of the absolute
error in Cartesian coordinates with respect to the initial guess $\Delta q_{0kl}$; 
\item the residual of the normalization condition for the moment of inertia
and the Cartesian coordinates of the center of mass.
\end{enumerate}
\item Results related to the direct method:
\begin{enumerate}
\item the number of solutions of the $(n+1)$-body problem $\widehat{N}_{\mathrm{sol}}(n+1)$;
\item for each configuration $\mathbf{\widehat{q}}^{(m)}$ of the $(n+1)$-body
problem, where $m=1,\ldots,\widehat{N}_{\mathrm{sol}}(n+1)$: (i)
the Cartesian coordinates of the point masses, (ii) the residual of
the relative equilibrium equations, and (iii) the RMS of the absolute
error in radial distances with respect to the analytic-continuation
solution $\Delta R_{m}$;
\end{enumerate}
\item the average RMS of the absolute errors in Cartesian coordinates and
mutual distances, and the computational times.
\end{enumerate}

\subsection{Central configurations}

The central configurations computed by the analytic-continuation method
in the cases $n=4$, $5$, and $6$ are illustrated in Figs. \ref{fig:CC4},
\ref{fig:CC5}, and \ref{fig:CC6}, respectively. The following features
are apparent.
\begin{enumerate}
\item If a configuration $\mathbf{q}_{0}^{(k)}$ of the $n$-body problem
has an axis of symmetry (reflection axis), the corresponding configuration
$\mathbf{q}^{(k,l)}$ of the $(n+1)$-body problem inherits this symmetry.
\item The solutions $\mathbf{q}^{(k,l)}$ of the $(n+1)$-body problem plotted
in Figs. \ref{fig:CC4}--\ref{fig:CC6} include discrete rotated
and reflected solutions. The distinct central configurations without
these symmetries are illustrated in Figs. \ref{fig:CC4a}--\ref{fig:CC6a}.
Note that these configurations are almost identical with the configurations
delivered by the direct method. 
\end{enumerate}
In Table \ref{tab:RMS}, we provide the numbers of central configurations,
the RMS errors, and the computational times for the analytic-continuation
and the direct method. The following conclusions can be drawn.
\begin{enumerate}
\item For the mass parameter $\varepsilon=10^{-8},10^{-9},10^{-10}$, the
numbers of solutions do not change, and the number of distinct solutions
(excluding discrete rotated and reflected solutions) computed by the
analytic-continuation method coincides with the number of solutions
computed by the direct method, i.e., $N_{\textrm{sol}}^{0}(n+1)=\widehat{N}_{\mathrm{sol}}(n+1)$.
This is an indication that the analytic-continuation algorithm presumably
delivers all solutions of the $(n+1)$-body problem.
\item The RMS error $\Delta q_{0}$ decreases with $\varepsilon$; this
result suggests that $||\mathbf{q}^{(k,l)}-\mathbf{q}_{0}^{(k,l)}||\rightarrow0$
as $m_{n+1}\rightarrow0$.
\item The RMS error $\Delta R$ is small; hence, the solutions corresponding
to the analytic-continuation and the direct method are very close.
\item The analytic-continuation method is on average 7 times faster than
the direct method. 
\end{enumerate}
\begin{figure}
\includegraphics[scale=0.5]{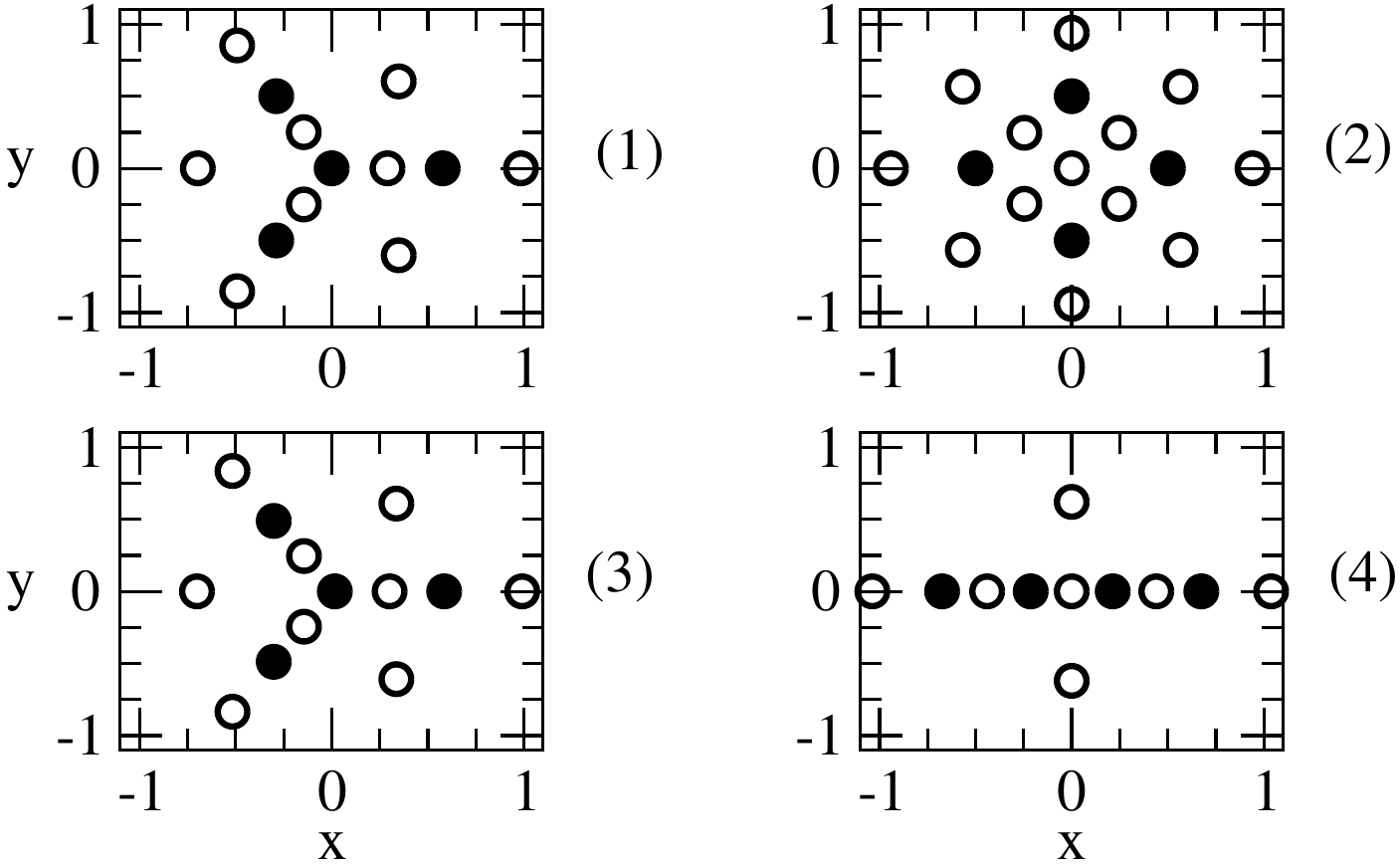}

\caption{Central configurations $\mathbf{q}^{(k,l)}=(\mathbf{q}^{(k)T},\mathbf{q}_{n+1}^{(k,l)T})^{T}$
in the case $n=4$. The solutions $\mathbf{q}^{(k)}$, corresponding
to the solutions $\mathbf{q}_{0}^{(k)}$ of the $n$-body problem,
are marked with filled circles and are shown in each of the four plots.
For each $\mathbf{q}^{(k)}$, the solutions $\mathbf{q}_{n+1}^{(k,l)}$,
corresponding to the restricted $(n+1)$-body problem, are marked
with open circles. The number of configurations for the $n$-body
problem is $N_{\textrm{sol}}(n)=4$, while the number of configurations
for the $(n+1)$-body problem is $N_{\textrm{sol}}(n+1)=38$. \label{fig:CC4}}
\end{figure}

\begin{figure}
\includegraphics[scale=0.5]{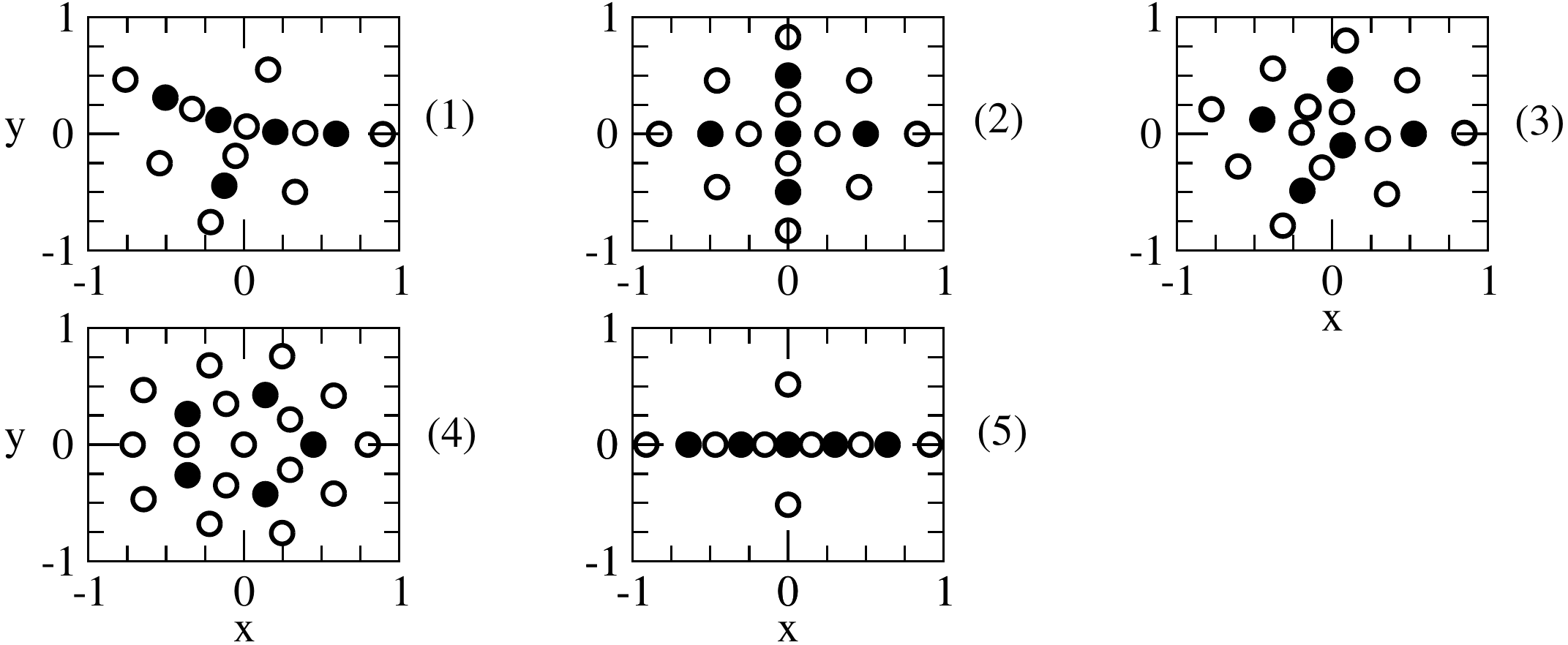}

\caption{The same as in Fig. \ref{fig:CC4} but for $n=5$. The number of configurations
for the $n$-body problem is $N_{\textrm{sol}}(n)=5$, while the corresponding
number of configurations for the $(n+1)$-body problem is $N_{\textrm{sol}}(n+1)=60$.\label{fig:CC5}}
\end{figure}

\begin{figure}
\includegraphics[scale=0.5]{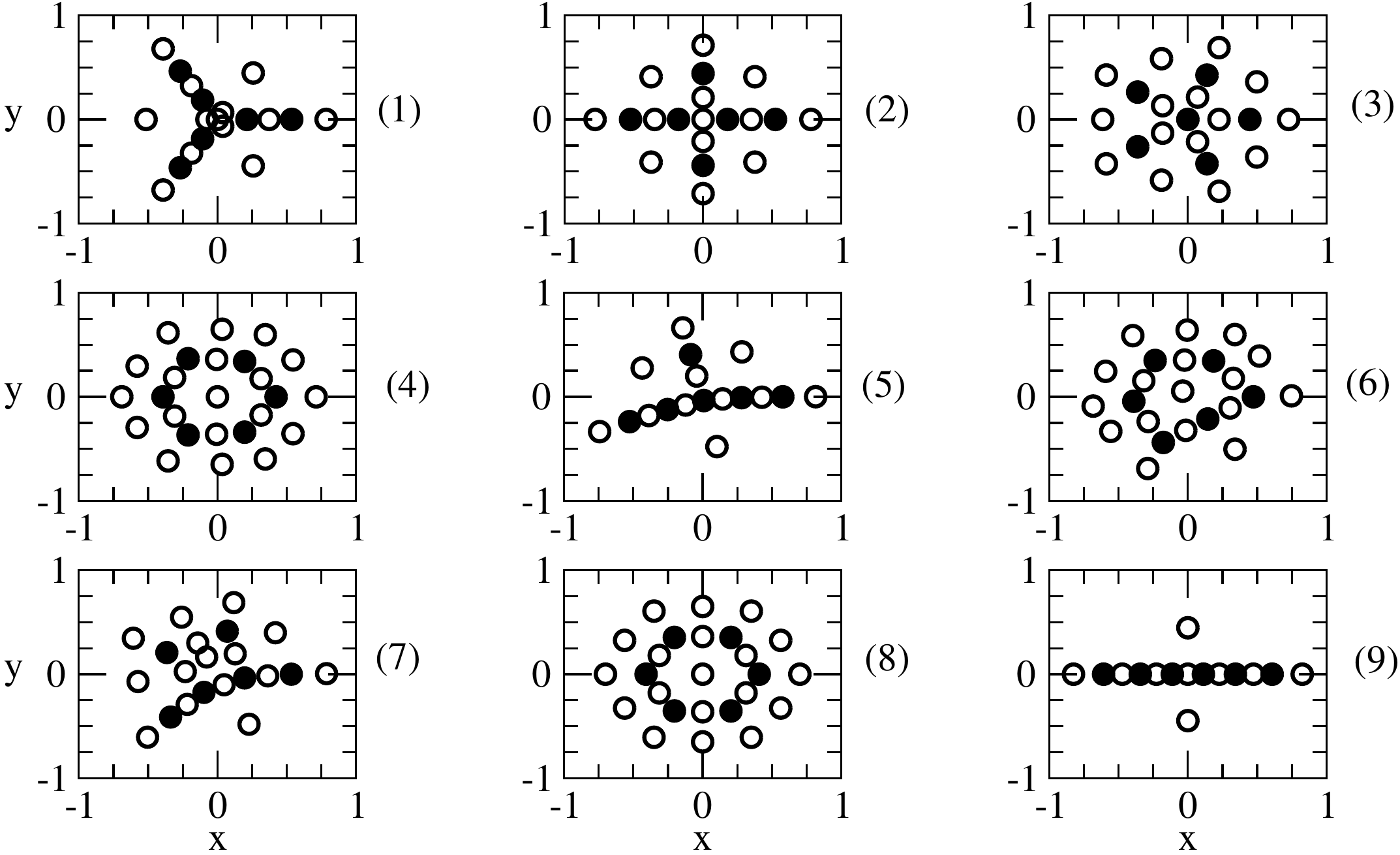}

\caption{The same as in Fig. \ref{fig:CC4} but for $n=6$. The number of configurations
for the $n$-body problem is $N_{\textrm{sol}}(n)=9$, while the number
of configurations for the $(n+1)$-body problem is $N_{\textrm{sol}}(n+1)=131$.\label{fig:CC6}}
\end{figure}

\begin{figure}
\includegraphics[scale=0.5]{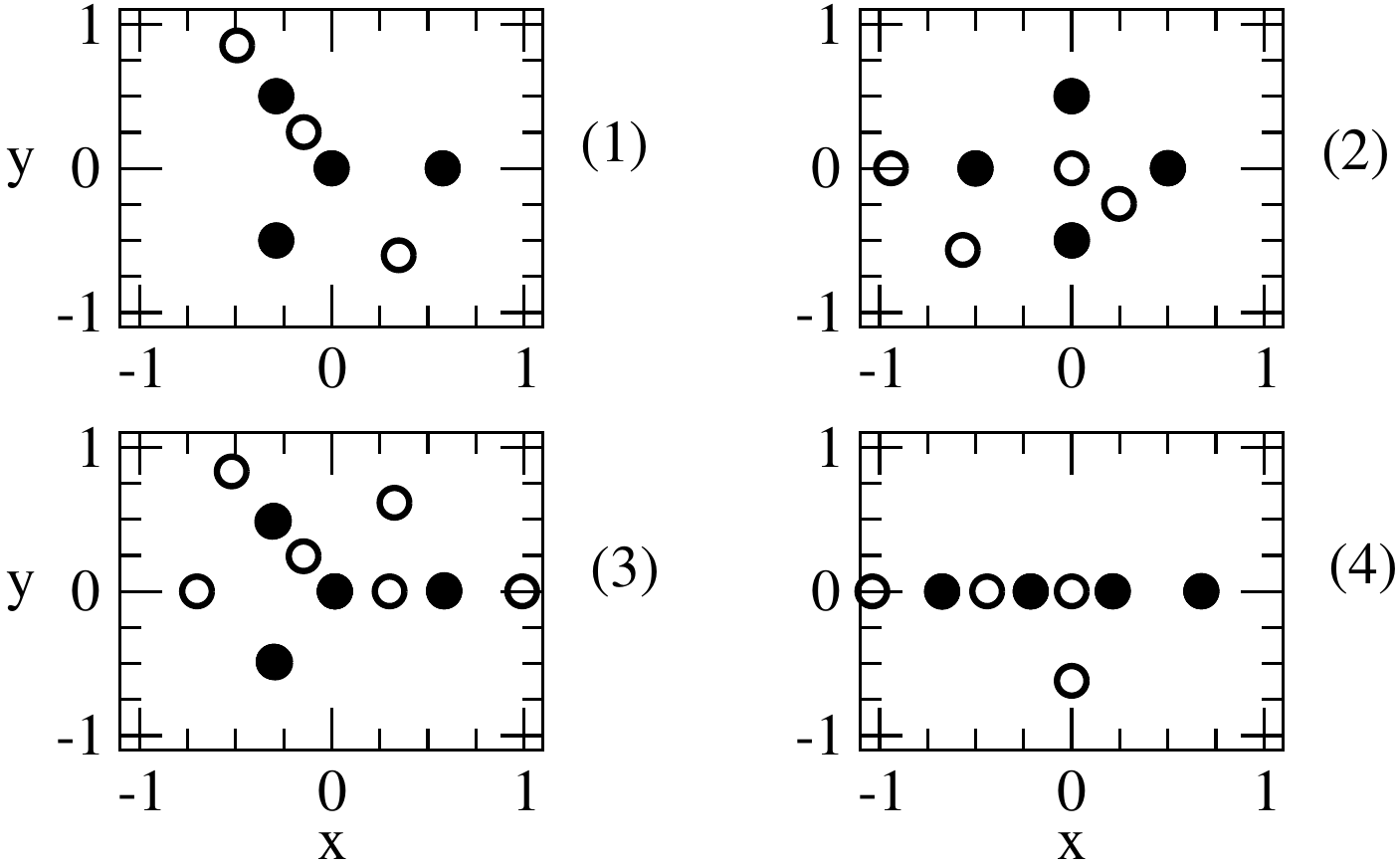}

\caption{Distinct central configurations (excluding discrete rotated and reflected
solutions) in the case $n=4$. The number of configurations for the
$n$-body problem is $N_{\textrm{sol}}(n)=4$, while the number of
distinct configurations for the $(n+1)$-body problem is $N_{\textrm{sol}}^{0}(n+1)=17$.
Note that these configurations also correspond to the direct method.
\label{fig:CC4a} }
\end{figure}

\begin{figure}
\includegraphics[scale=0.5]{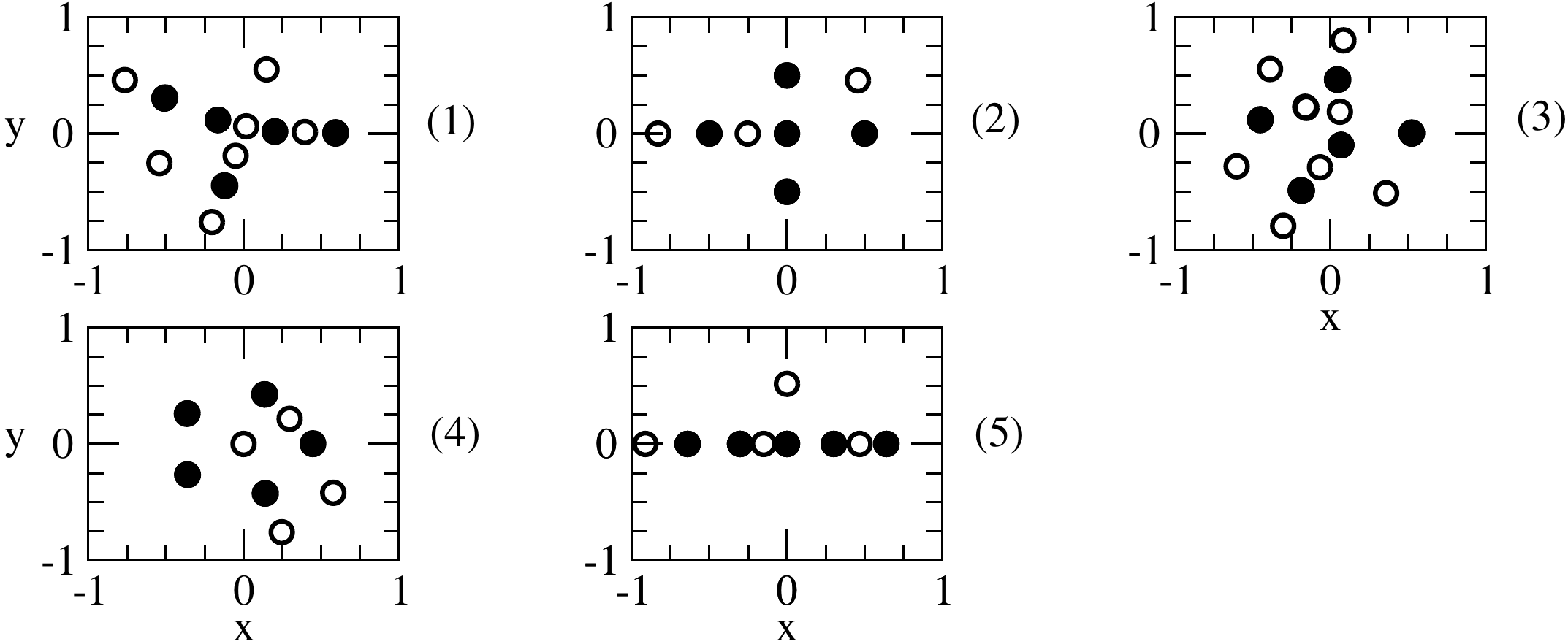}

\caption{The same as in Fig. \ref{fig:CC4a} but for $n=5$. The number of
configurations for the $n$-body problem is $N_{\textrm{sol}}(n)=5$,
while the number of distinct configurations for the $(n+1)$-body
problem is $N_{\textrm{sol}}^{0}(n+1)=27$. \label{fig:CC5a}}
\end{figure}

\begin{figure}
\includegraphics[scale=0.5]{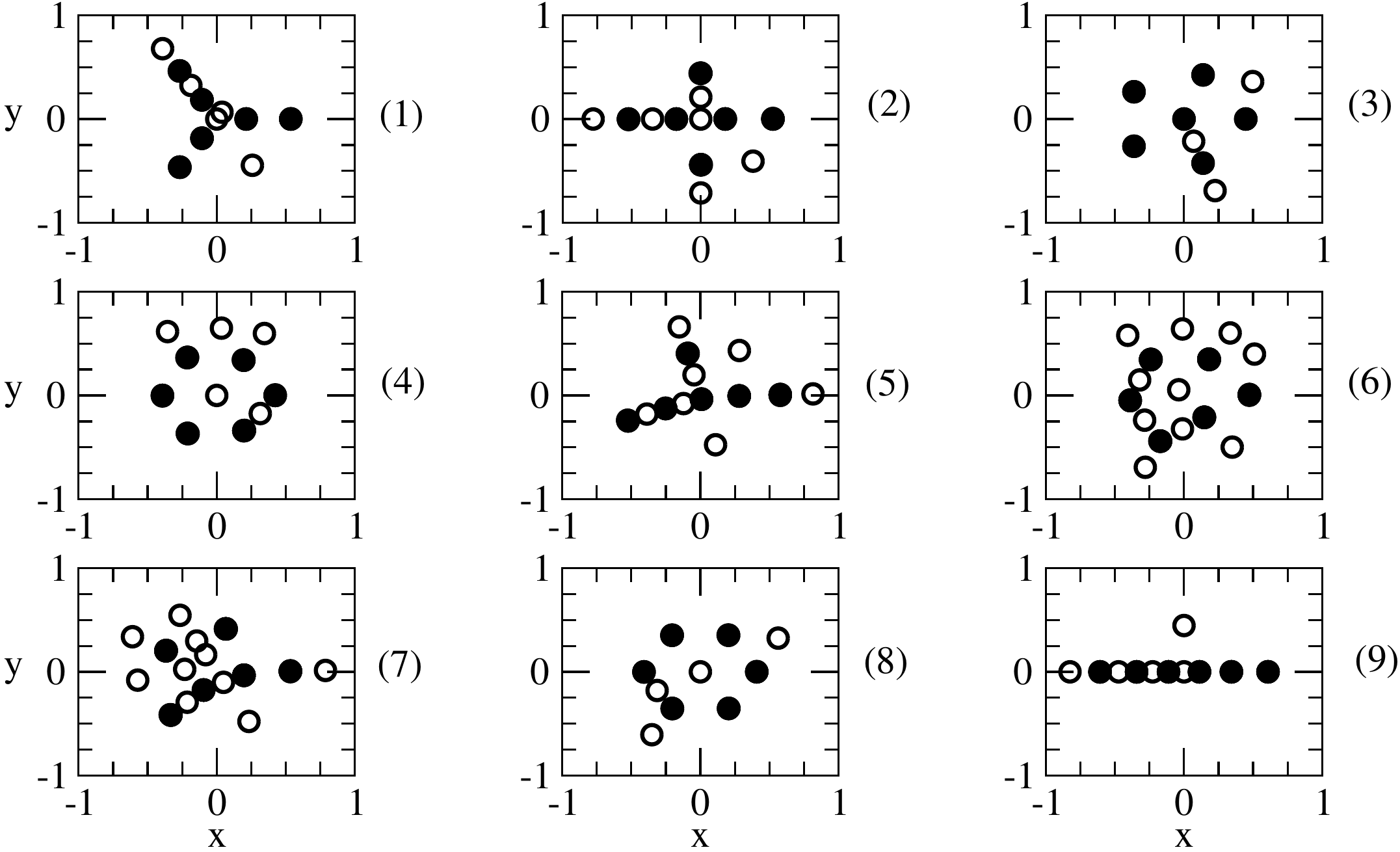}

\caption{The same as in Fig. \ref{fig:CC4a} but for $n=6$. The number of
configurations for the $n$-body problem is $N_{\textrm{sol}}(n)=9$,
while the number of distinct configurations for the $(n+1)$-body
problem is $N_{\textrm{sol}}^{0}(n+1)=55$. \label{fig:CC6a}}
\end{figure}

\begin{table}
\caption{Numbers of central configurations, the RMS errors, and the computational
times corresponding to the analytic-continuation method and the direct
method. Here, $n$ is the number of point masses, $\varepsilon$ the
mass parameter, $N_{\textrm{sol}}(n)$ and $N_{\textrm{sol}}(n+1)$
the number of central configurations computed by the analytic-continuation
method for the $n$- and $(n+1)$-body problems, respectively, $N_{\textrm{sol}}^{0}(n+1)$
the number of distinct solutions (excluding discrete rotated and reflected
solutions), $\widehat{N}_{\textrm{sol}}(n+1)$ the number of central
configurations computed by the direct method for the $(n+1)$-body
problem, $\Delta q_{0}$ the average RMS of the absolute errors in
Cartesian coordinates, and $\Delta R$ the average RMS of the absolute
errors in mutual distances. The RMS value $x.yz(e)$ should be understand
as $x.yz\times10^{e}$, and the computational time is given in minutes:seconds.
\label{tab:RMS} }

\medskip{}

\resizebox{\textwidth}{!}{

\begin{tabular}{ccccccccccc}
\hline 
\multirow{2}{*}{$n$} & \multirow{2}{*}{$\varepsilon$} & \multicolumn{5}{c}{Analytic-continuation method} & \multirow{2}{*}{} & \multicolumn{3}{c}{Direct method}\tabularnewline
\cline{3-7} \cline{4-7} \cline{5-7} \cline{6-7} \cline{7-7} \cline{9-11} \cline{10-11} \cline{11-11} 
 &  & $N_{\textrm{sol}}(n)$ & $N_{\textrm{sol}}(n+1)$ & $N_{\textrm{sol}}^{0}(n+1)$ & $\begin{array}{c}
\textrm{RMS}\\
\Delta q_{0}
\end{array}$ & Time &  & $\widehat{N}_{\textrm{sol}}(n+1)$ & $\begin{array}{c}
\textrm{RMS}\\
\Delta R
\end{array}$ & Time\tabularnewline
\cline{1-7} \cline{2-7} \cline{3-7} \cline{4-7} \cline{5-7} \cline{6-7} \cline{7-7} \cline{9-11} \cline{10-11} \cline{11-11} 
\multirow{3}{*}{4} & $10^{-8}$ & \multirow{3}{*}{4} & \multirow{3}{*}{38} & \multirow{3}{*}{17} & 3.03(-8) & 0:30 &  & \multirow{3}{*}{17} & 6.85(-8) & 8:13\tabularnewline
 & $10^{-9}$ &  &  &  & 3.04(-9) & 0:22 &  &  & 5.86(-9) & 6:14\tabularnewline
 & $10^{-10}$ &  &  &  & 4.44(-10) & 0:20 &  &  & 5.76(-10) & 5:32\tabularnewline
\cline{1-7} \cline{2-7} \cline{3-7} \cline{4-7} \cline{5-7} \cline{6-7} \cline{7-7} \cline{9-11} \cline{10-11} \cline{11-11} 
\multirow{3}{*}{5} & $10^{-8}$ & \multirow{3}{*}{5} & \multirow{3}{*}{60} & \multirow{3}{*}{27} & 1.09(-8) & 0:54 &  & \multirow{3}{*}{27} & 3.24(-9) & 8:20\tabularnewline
 & $10^{-9}$ &  &  &  & 1.10(-9) & 0:53 &  &  & 3.41(-10) & 8:31\tabularnewline
 & $10^{-10}$ &  &  &  & 2.40(-10) & 0:53 &  &  & 1.03(-10) & 10:18\tabularnewline
\cline{1-7} \cline{2-7} \cline{3-7} \cline{4-7} \cline{5-7} \cline{6-7} \cline{7-7} \cline{9-11} \cline{10-11} \cline{11-11} 
\multirow{3}{*}{6} & $10^{-8}$ & \multirow{3}{*}{9} & \multirow{3}{*}{131} & \multirow{3}{*}{55} & 4.64(-8) & 2:54 &  & \multirow{3}{*}{55} & 1.19(-9) & 17:05\tabularnewline
 & $10^{-9}$ &  &  &  & 4.64(-9) & 2:57 &  &  & 1.43(-10) & 15:42\tabularnewline
 & $10^{-10}$ &  &  &  & 4.69(-10) & 2:52 &  &  & 2.81(-10) & 14:58\tabularnewline
\hline 
\end{tabular}

}
\end{table}

\subsection{Balanced configurations}

The balanced configurations computed by the analytic-continuation
method in the cases $n=4$, $5$, and $6$ are illustrated in Figs.
\ref{fig:BC4}, \ref{fig:BC5}, and \ref{fig:bc6}, respectively,
while the numbers of balanced configurations, the RMS errors, and
the computational times for the analytic-continuation and the direct
method are given in Table \ref{tab:RMS-1}. As in the case of central
configurations, we are led to the following conclusions.
\begin{enumerate}
\item If a configuration $\mathbf{q}_{0}^{(k)}$ of the $n$- body problem
has an axis of symmetry with respect to the $x$- or $y$-axis, or
is symmetric with respect to the origin of the coordinate system,
the same happens with the corresponding configuration $\mathbf{q}^{(k,l)}$
of the $(n+1)$-body. Parenthetically we note that the number of solutions
is significantly higher than that of central configurations.
\item For the mass parameter $\varepsilon=10^{-8},10^{-9},10^{-10}$, (i)
the number of distinct solutions (excluding reflected solutions) computed
by the analytic-continuation method coincides with the number of solutions
computed by the direct method, i.e., $N_{\textrm{sol}}^{0}(n+1)=\widehat{N}_{\mathrm{sol}}(n+1)$,
(ii) the RMS error $\Delta q_{0}$ decreases with $\varepsilon$,
(iii) the RMS error $\Delta R$ is small, and (iv) the analytic-continuation
method is much more efficient than the direct method.
\end{enumerate}
\begin{figure}
\includegraphics[scale=0.5]{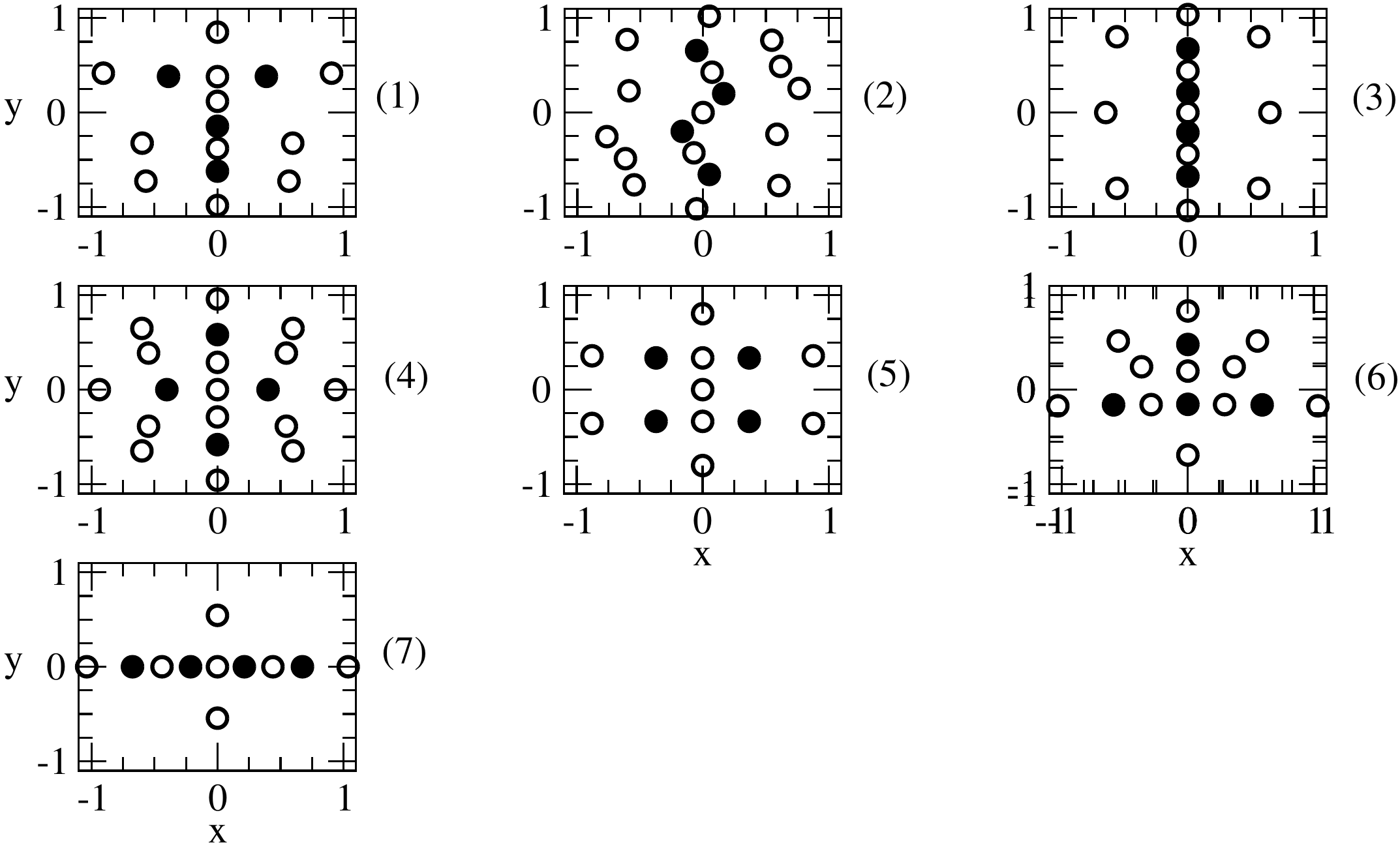}

\caption{Balanced configurations $\mathbf{q}^{(k,l)}=(\mathbf{q}^{(k)T},\mathbf{q}_{n+1}^{(k,l)T})^{T}$
in the case $n=4$. The solutions $\mathbf{q}^{(k)}$, corresponding
to the solutions $\mathbf{q}_{0}^{(k)}$ of the $n$-body problem,
are marked with filled circles, and for each $\mathbf{q}^{(k)}$,
the solutions $\mathbf{q}_{n+1}^{(k,l)}$, corresponding to the restricted
$(n+1)$-body problem, are marked with open circles. The number of
configurations for the $n$-body problem is $N_{\textrm{sol}}(n)=7$,
while the number of configurations for the $(n+1)$-body problem is
$N_{\textrm{sol}}(n+1)=79$. \label{fig:BC4}}
\end{figure}

\begin{figure}
\includegraphics[scale=0.5]{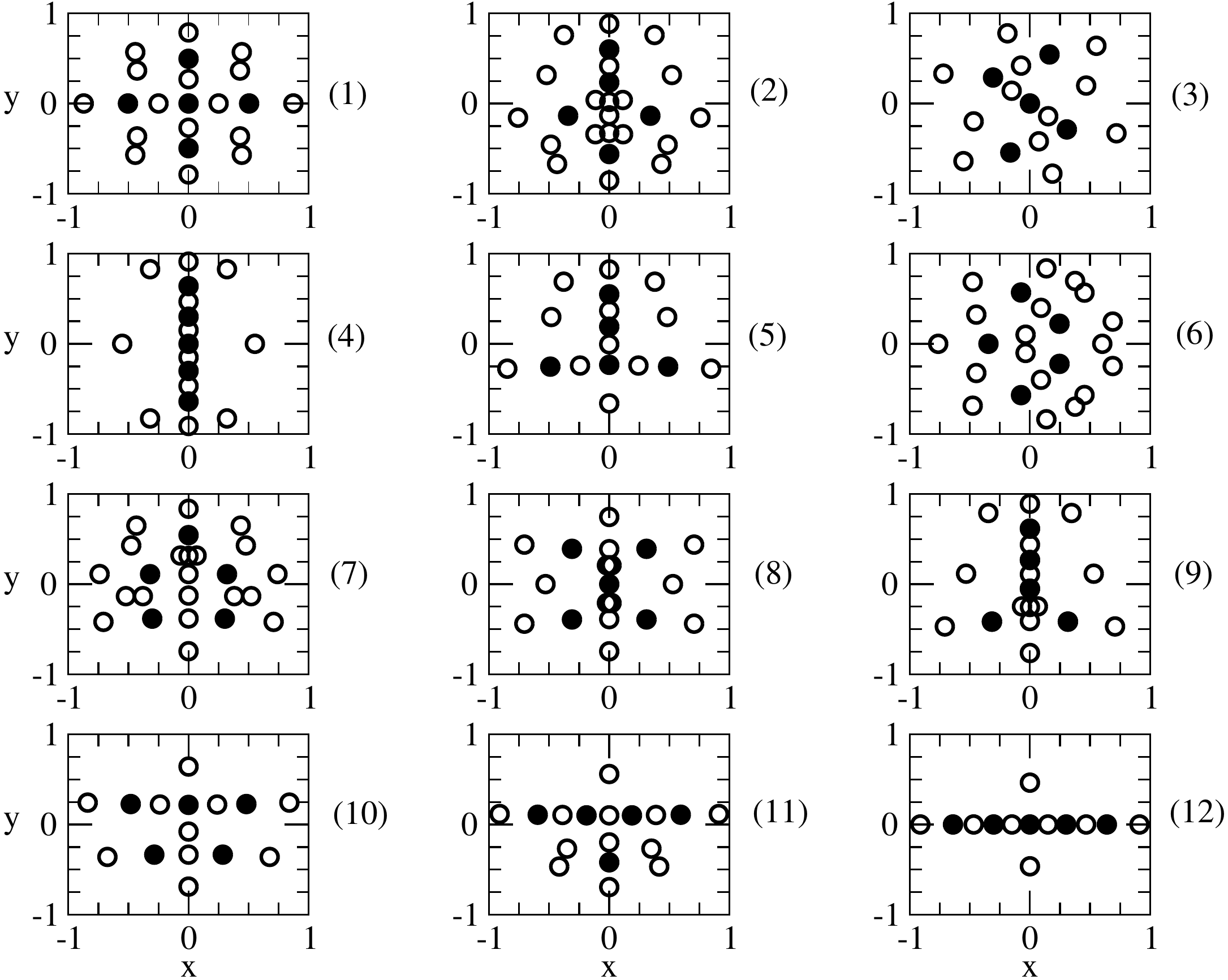}

\caption{The same as in Fig. \ref{fig:BC4} but for $n=5$. The number of configurations
for the $n$-body problem is $N_{\textrm{sol}}(n)=12$, while the
number of configurations for the $(n+1)$-body problem is $N_{\textrm{sol}}(n+1)=170$.\label{fig:BC5}}
\end{figure}

\begin{figure}
\includegraphics[scale=0.5]{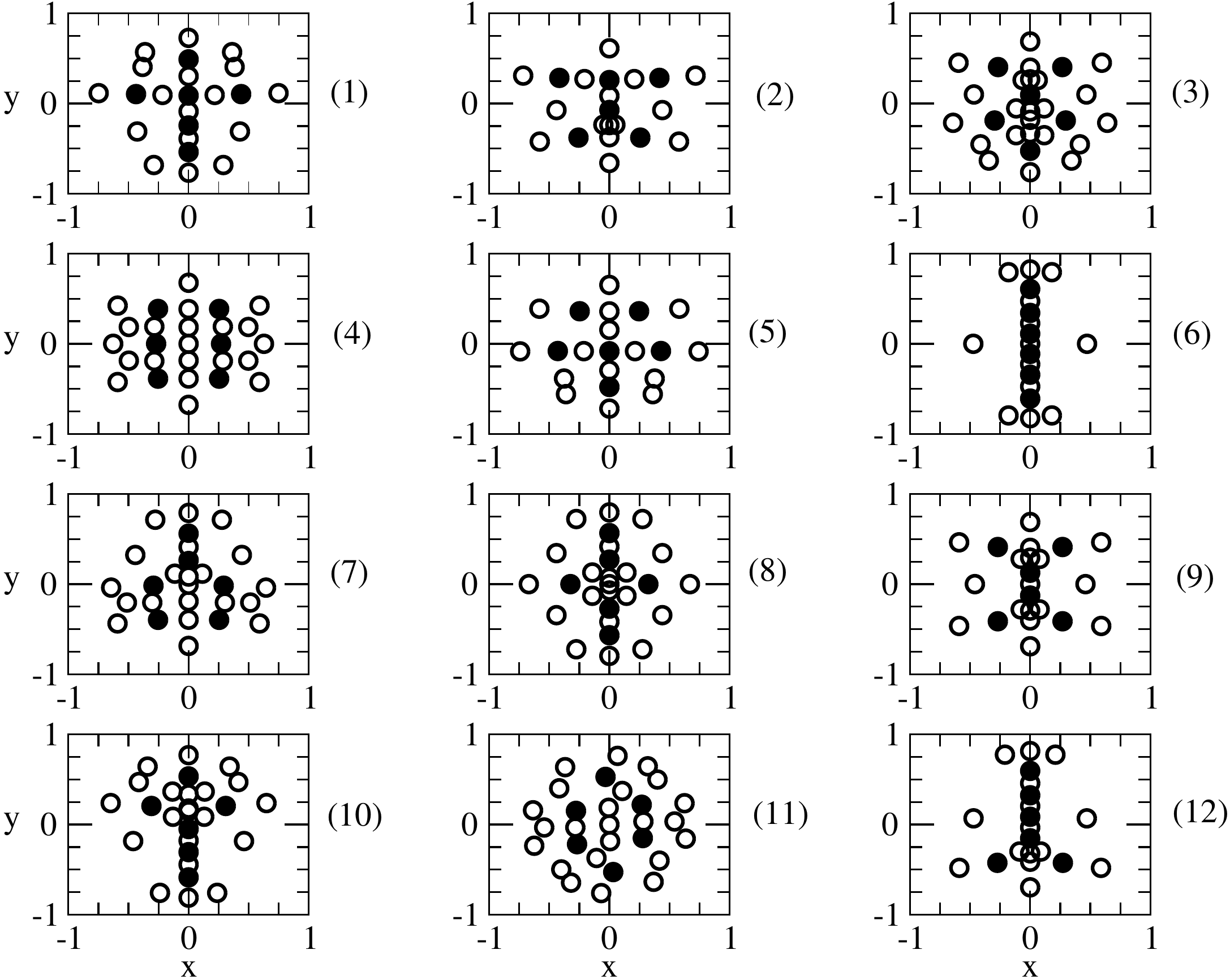}

\caption{The same as in Fig. \ref{fig:BC4} but for $n=6$. The number of configurations
for the $n$-body problem is $N_{\textrm{sol}}(n)=22$, while the
number of configurations for the $(n+1)$-body problem is $N_{\textrm{sol}}(n+1)=366$.
\label{fig:bc6}}
\end{figure}

\newpage{}

\begin{center}
\begin{figure}
\includegraphics[scale=0.167]{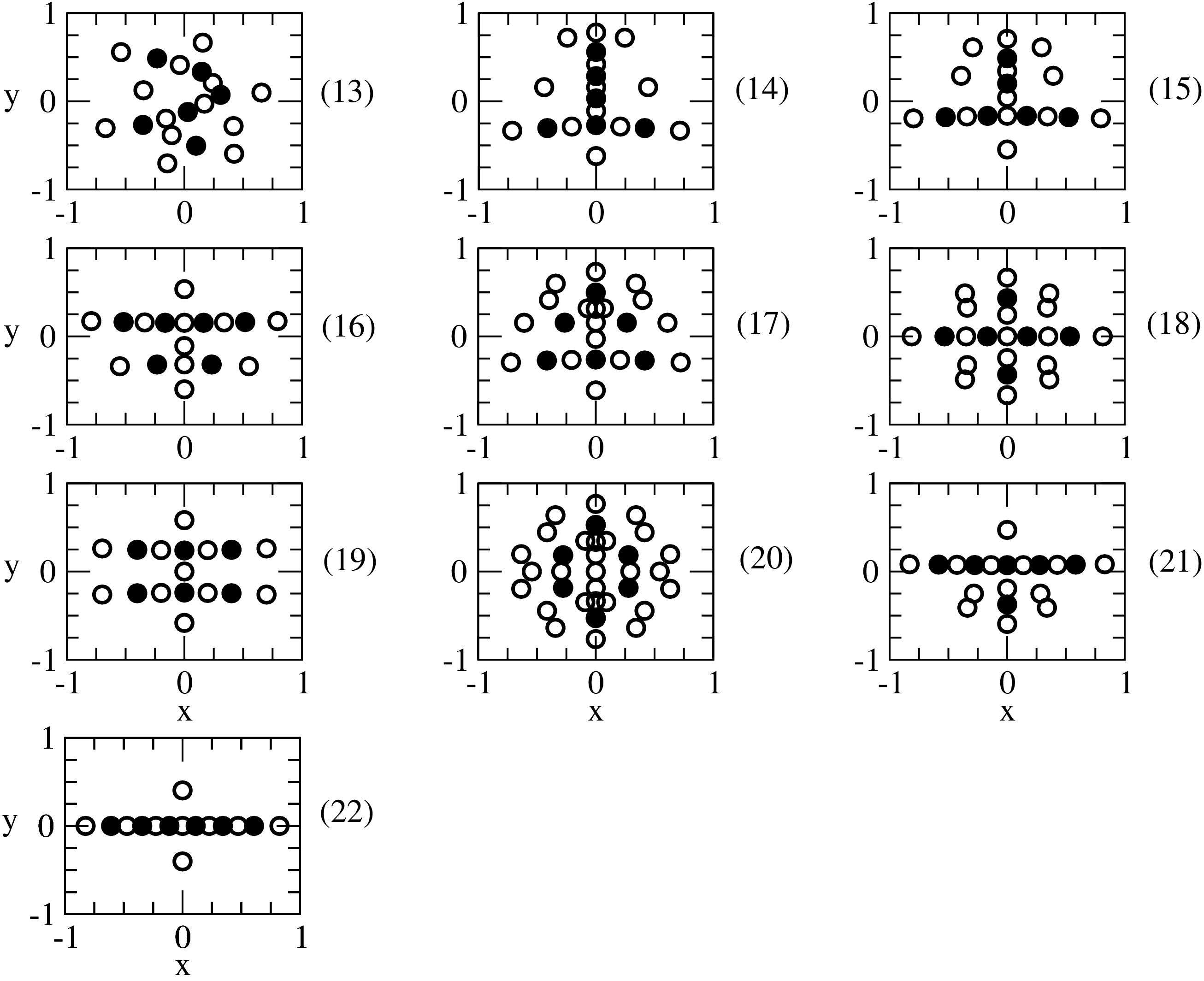}

\caption{Continuation of Fig. \ref{fig:bc6}. \label{fig:bc6-1}}
\end{figure}
\end{center}




\begin{table}
\caption{The same as in Table \ref{tab:RMS} but for balanced configurations.
Here, the number of distinct solutions computed by the analytic-continuation
method $N_{\textrm{sol}}^{0}(n+1)$ excludes reflected solutions.
\label{tab:RMS-1} }

\medskip{}

\resizebox{\textwidth}{!}{

\begin{tabular}{ccccccccccc}
\hline 
\multirow{2}{*}{$n$} & \multirow{2}{*}{$\varepsilon$} & \multicolumn{5}{c}{Analytic-continuation method} & \multirow{2}{*}{} & \multicolumn{3}{c}{Direct method}\tabularnewline
\cline{3-7} \cline{4-7} \cline{5-7} \cline{6-7} \cline{7-7} \cline{9-11} \cline{10-11} \cline{11-11} 
 &  & $N_{\textrm{sol}}(n)$ & $N_{\textrm{sol}}(n+1)$ & $N_{\textrm{sol}}^{0}(n+1)$ & $\begin{array}{c}
\textrm{RMS}\\
\Delta q_{0}
\end{array}$ & Time &  & $\widehat{N}_{\textrm{sol}}(n+1)$ & $\begin{array}{c}
\textrm{RMS}\\
\Delta R
\end{array}$ & Time\tabularnewline
\cline{1-7} \cline{2-7} \cline{3-7} \cline{4-7} \cline{5-7} \cline{6-7} \cline{7-7} \cline{9-11} \cline{10-11} \cline{11-11} 
\multirow{3}{*}{4} & $10^{-8}$ & \multirow{3}{*}{7} & \multirow{3}{*}{79} & \multirow{3}{*}{42} & 1.84(-8) & 0:31 &  & \multirow{3}{*}{42} & 4.54(-9) & 5:34\tabularnewline
 & $10^{-9}$ &  &  &  & 2.09(-9) & 0:31 &  &  & 1.80(-9) & 5:47\tabularnewline
 & $10^{-10}$ &  &  &  & 5.81(-10) & 0:30 &  &  & 4.18(-10) & 6:04\tabularnewline
\cline{1-7} \cline{2-7} \cline{3-7} \cline{4-7} \cline{5-7} \cline{6-7} \cline{7-7} \cline{9-11} \cline{10-11} \cline{11-11} 
\multirow{3}{*}{5} & $10^{-8}$ & \multirow{3}{*}{12} & \multirow{3}{*}{170} & \multirow{3}{*}{96} & 1.22(-8) & 1:10 &  & \multirow{3}{*}{96} & 6.35(-9) & 9:17\tabularnewline
 & $10^{-9}$ &  &  &  & 1.29(-9) & 1:10 &  &  & 8.89(-10) & 9:10\tabularnewline
 & $10^{-10}$ &  &  &  & 3.02(-10) & 1:12 &  &  & 4.30(-10) & 9:24\tabularnewline
\cline{1-7} \cline{2-7} \cline{3-7} \cline{4-7} \cline{5-7} \cline{6-7} \cline{7-7} \cline{9-11} \cline{10-11} \cline{11-11} 
\multirow{3}{*}{6} & $10^{-8}$ & \multirow{3}{*}{22} & \multirow{3}{*}{366} & \multirow{3}{*}{210} & 1.04(-8) & 3:01 &  & \multirow{3}{*}{210} & 1.19(-8) & 18:13\tabularnewline
 & $10^{-9}$ &  &  &  & 1.07(-9) & 2.19 &  &  & 1.44(-9) & 18:42\tabularnewline
 & $10^{-10}$ &  &  &  & 1.52(-10) & 2:28 &  &  & 5.67(-10) & 18:50\tabularnewline
\hline 
\end{tabular}

}
\end{table}

\section{Conclusions}

Planar central and balanced configurations in the $(n+1)$-body problem
with a small mass have been analyzed from a numerical point of view.
For this purpose, two algorithms have been designed. The first one
relies on a direct solution method of the $(n+1)$-body problem by
using a stochastic optimization approach, while the second one is
based on an analytic-continuation method. The analytic-continuation
algorithm involves three computational steps. These include the solutions
of the $n$-body and the restricted $(n+1)$-body problem, and the
application of a local search procedure to compute the final $(n+1)$-body
configuration in the neighborhood of the configuration obtained at
the first two steps. Our numerical experiments have showed that 
\begin{enumerate}
\item if a configuration of the $n$-body problem has an axis of symmetry,
the corresponding configuration of the $(n+1)$-body problem inherits
this symmetry,
\item for sufficiently small values of the mass $m_{n+1}$, both algorithms
deliver almost identical configurations,
\item the algorithm based on the analytic-continuation method is on average
7 times faster than the algorithm based on the direct method.
\end{enumerate}
\begin{acknowledgement*}
Alexandru Doicu and Lei Zhao were supported by DFG ZH 605/1-1.
\end{acknowledgement*}

\end{document}